\numberwithin{equation}{section} 
\newtheorem{thm}{Theorem}[section]
\newtheorem{claim}[thm]{Claim}
\newtheorem{cor}[thm]{Corollary}
\newtheorem{lem}[thm]{Lemma}
\newtheorem{defn}[thm]{Definition}
\newtheorem{assump}[thm]{Assumption}
\newtheorem{rem}[thm]{Remark}
\newlist{steps}{enumerate}{1}
\setlist[steps, 1]{label = Step \arabic':}
\begin{document}

\title{A higher order sparse grid combination technique}

\author{Julia Mu{\~n}oz-Ech{\'a}niz\thanks{Mathematical Institute, University of Oxford, United Kingdom, Email: juliamunozechanizge@gmail.com}
\and Christoph Reisinger\thanks{Mathematical Institute, University of Oxford, United Kingdom, Email: christoph.reisinger@maths.ox.ac.uk}}
\date{\today} 

\maketitle

\begin{abstract}
We show that a generalised sparse grid combination technique which combines multivariate extrapolation of finite
difference solutions with the standard combination formula lifts a second order accurate scheme on regular meshes
to a fourth order combined sparse grid solution. In the analysis, working in a general dimension, we characterise all terms in a multivariate error expansion of the scheme as solutions of a sequence of semi-discrete problems. This is first carried out formally under suitable assumptions on the truncation error of the scheme, stability and regularity of solutions. We then verify the assumptions on the example of the Poisson problem with smooth data, and illustrate the practical convergence in up to seven dimensions.
\end{abstract}



\section{Introduction}\label{sec:intro}

The solution of high‐dimensional PDEs has been a long-running challenge to computational methods.
The traditional mesh-based approach is susceptible to an exponential growth with respect to the dimension of the degrees of freedom required for a desired accuracy.
This is seen most explicitly  for standard tensor‐product discretizations.  To mitigate this curse of dimensionality, several approaches are being utilised, including adaptive tensor approximations (see \cite{bachmayr2023low} for a survey), artificial neural networks 
(including deep Galerkin \cite{sirignano2018dgm}, PINNs \cite{raissi2019physics}, and deep BSDE methods \cite{han2018solving}),
and sparse grids (see \cite{Zenger1990} for the seminal paper; \cite{BungartzGriebel2004} for a survey, including sparse finite elements \cite{Bungartz1998, SchwabEtAl2008} or wavelets \cite{vonPetersdorffSchwab2004}, and the combination technique \cite{GriebelSchneiderZenger1992, hegland2016recent}), the latter of which are the focus of this work.

A sparse basis as introduced by \cite{Zenger1990} is understood to be a subspace of a standard basis on a tensor product mesh, chosen for an optimal trade-off between accuracy and computational complexity in spaces with mixed regularity.
It can be shown (see e.g.\ \cite{BungartzGriebel2004}) that such a space with standard multi-linear basis functions 
achieves an approximation accuracy of \( \mathcal{O}\bigl(N^{-2}(\log N)^{d-1}\bigr)\) for a certain regularity class, at a cost \( \mathcal{O}\bigl(N\,(\log N)^{d-1}\bigr)\), which is much reduced to that of a standard approach, with cost \(\mathcal{O}(N^d)\), for error \(\mathcal{O}(N^{-2})\).
Using standard Galerkin arguments, this approximation rate in suitable spaces translates into error bounds for finite element solutions \cite{Bungartz1992, Bungartz1998}.

In contrast, the sparse grid combination technique introduced in \cite{GriebelSchneiderZenger1992, hegland2016recent} defines solutions on each of the regular, anisotropic tensor product meshes that constitute the sparse grid basis, and then forms a linear combination. This vastly simplifies the implementation in a number of aspects: standard code on tensor meshes (e.g., finite differences, finite elements) can be used; the resulting discretisation matrices have a simpler banded structure than the sparse finite element stiffness matrix; the schemes on the different meshes can be solved fully in parallel.

\bigskip
The analysis of the combination technique, however, is more complicated and relies on an intricate multivariate expansion
of certain hierarchical increments.
There are broadly two classes of proofs. Those such as \cite{Pflaum1997} and \cite{PflaumZhou1999} work with Sobolev
space techniques for elliptic operators to derive directly bounds on the multivariate hierarchical surplus (see definition below), from which
the overall error follows directly. This was extended to more general operators in tensor spaces in \cite{griebel2014convergence}.
The second class of proofs first shows a specific multivariate expansion for the error on anisotropic Cartesian meshes, grouped by all possible subsets of coordinate directions, and then exploits an extrapolation mechanism to derive the overall sparse grid error.
This is the approach taken in \cite{BungartzGriebelRoeschkeZenger1994} via Fourier analysis and in \cite{reisinger2013analysis} by using a sequence of auxiliary semi-discrete problems to define the terms in the expansion.

\bigskip

Given that the sparse grid combination technique can be usefully viewed as an extrapolation formula based on a certain multivariate error expansion,
it is natural to ask whether the weights can be chosen, possibly by inclusion of more meshes, so that the order of convergence is increased from that of the schemes used on the individual meshes.
This is straightforward for regular (`full') tensor meshes, where linear combination of a coarse mesh and one refined in all directions can be chosen to eliminate the leading order error term, a method known as Richardson extrapolation
(see \cite{richardson1927viii}).
The direct application to sparse grid solutions at subsequent refinement levels fails due to the logarithmic factors in the leading order error terms, so that na{\"i}ve Richardson extrapolation of the combination solution does not improve the order.

In our analysis, instead, we carry out a multivariate extrapolation of each of the solutions on the regular meshes that contribute to the sparse grid, to obtain a solution on those meshes with an appropriate higher order multivariate error expansion. 
Then the sparse grid combination technique can be applied to give a sparse grid solution of higher order.
Put simply, instead of "combine, then extrapolate", we need to "extrapolate, then combine".

The analysis of such a scheme requires broadly two steps. First, we have to show that the low order solution on the tensor meshes has an error expansion which can be split into low order terms (which are to be extrapolated) and pure high order terms.
For this expansion, we are inspired by the recursive application of semi-discrete schemes in \cite{reisinger2013analysis}, but require substantially new steps to separate out all low order terms from the desired higher order terms.
Second, we have to find extrapolation weights which cancel the low order terms.

The main contributions of this paper are:
\begin{itemize}
    \item 
    the derivation of a fourth order sparse grid combination formula from regular tensor product solutions with a multivariate
    error expansion of second order;
    \item
    the proof of the required error expansion under generic conditions on stability, smoothness, and the truncation error of discrete and semi-discrete problems;
    \item
    the verification of the necessary assumptions on the example of the Poisson problem with smooth data;
    \item
    numerical confirmation of the predicted asymptotic error for Poisson problems in two to seven dimensions.
\end{itemize}

The remainder of this paper is structured as follows. Section \ref{Sec: Chapter 1} introduces the combination technique, states relevant existing results, and presents the main theorems of the paper.
Section \ref{sec:Chapter 4} derives the extrapolation formula and gives a proof that under a certain multivariate second order error expansion, the extrapolated solution has an analogous fourth order expansion, which ultimately leads to a fourth order error of the sparse grid solution. Section \ref{sec:Chapter 5} proves the form of the second order expansion under formal assumptions, which is the most technical part of this paper. Finally, Section \ref{sec:appl} verifies these formal assumptions for the central difference scheme for the Poisson problem and presents numerical tests.


\section{Background and Main Results}
\label{Sec: Chapter 1}

\subsection{Combination Technique}

The combination technique constructs a single global approximation by linearly combining solutions on a carefully chosen family of anisotropic tensor–product grids. 

Let $I^d = [0,1]^d$ denote the $d$-dimensional unit cube, and consider a family of Cartesian grids with mesh sizes $h_i = 2^{-l_i}$ in each direction $i = 1, \ldots, d$, with $l_i \in \mathbb{N}_0$. For a vector $\mathbf{l} = (l_1, \ldots, l_d) \in \mathbb{N}_0^d$, define $U(\mathbf{l}) := u_{2^{-\mathbf{l}}} = u_{\mathbf{h}}$ as the approximation of a function $u: I^d \rightarrow \mathbb{R}$ on the corresponding tensor-product grid with mesh sizes $\mathbf{h} = 2^{-\mathbf{l}}$.

In particular, we denote by $\mathbf{u}_{\mathbf{h}}$, the discrete vector of (approximated) values at the grid points, which is extended to $I^d$ by a suitable interpolation operator $\mathcal{I}$ as $u_{\mathbf{h}} := \mathcal{I} \mathbf{u}_{\mathbf{h}}$. We will ultimately be concerned with a setting where $u_{\mathbf{h}}$ is the finite difference solution to some PDE, in which case $u_{\mathbf{h}}$ is different from the vector obtained by evaluating the exact solution to the PDE, $u$, at the grid points. Therefore we denote the latter by $u(\mathbf{x}_{\mathbf{h}})$.

Given a family $U = (U(\mathbf{l}))_{\mathbf{l} \in \mathbb{N}_0^d}$ of such approximations, the \emph{hierarchical surplus} is defined as the sequence
\[
\delta U := \delta_1 \delta_2 \cdots \delta_d U,
\]
where the univariate difference operator $\delta_k$ acts component-wise:
\[
\delta_k U(\mathbf{l}) := 
\begin{cases}
U(\mathbf{l}) - U(\mathbf{l} - \mathbf{e}_k), & \text{if } l_k > 0, \\
U(\mathbf{l}), & \text{if } l_k = 0,
\end{cases}
\]
and $\mathbf{e}_k$ is the $k$-th unit vector in $\mathbb{R}^d$. The operators $\delta_k$ commute.

Given an index set $\mathcal{M}_n \subset \mathbb{N}_0^d$, the approximation at level $n$ is defined as:
\[
u_n := \sum_{\mathbf{l} \in \mathcal{M}_n} \delta U(\mathbf{l}).
\]
For the standard Smolyak index set $\mathcal{M}_n := \{\mathbf{l} \in \mathbb{N}_0^d : |\mathbf{l}|_1 := l_1 + \cdots + l_d \leq n\}$, the sparse grid approximation at level $n$ (see \cite{Smolyak1963}) takes the explicit form:
\begin{equation}
\label{combi_form}
u_n = \sum_{l = n}^{n + d - 1} a_{l-n} \sum_{|\mathbf{l}|_1 = l} U(\mathbf{l}), \qquad
a_i = (-1)^{d-1-i} \binom{d - 1}{i}, \quad 0 \leq i \leq d - 1.
\end{equation}

The main reason for using the combination technique is its ability to reduce computational cost compared to standard `full' meshes while maintaining a good level of accuracy, especially in high-dimensional problems. 
A full tensor product grid at level \( n \) in dimension \( d \) requires \( \mathcal{O}(2^{nd}) \) grid points, leading to an exponential growth in computational complexity with respect to \( d \).
In contrast, the combination technique employs a set of anisotropic grids indexed by multi-indices \( \mathbf{i} = (i_1, \ldots, i_d) \in \mathbb{N}_0^d \) satisfying \( |\mathbf{i}|_1 \leq n \). The number of such grids with $|\mathbf{i}|_1 = k$ is given by:
\[
\binom{k + d - 1}{d - 1} = O(k^{d-1}).
\]
Each individual grid at level \( \mathbf{i} \) has approximately \( O(2^{|\mathbf{i}|_1}) \leq O(2^{n}) \) grid points. Summing over all grids, the total number of degrees of freedom required by the combination technique satisfies:
\[
N_{\text{dof}} = \mathcal{O}(n^{d-1} 2^n).
\]
This leads to a significant reduction in computational cost compared to the full grid cost \( \mathcal{O}(2^{nd}) \), especially as the dimension \( d \) increases. The combination technique thus provides a tractable approach for high-dimensional problems where full grid computations would be infeasible, subject to error control, as discussed in the following.


\subsection{Pre-existing Error Analysis}

We briefly outline the approach to the error analysis of the combination technique that consists of two main components, as outlined in the introduction: first, establishing error expansions 
that decompose the discretisation error into contributions from lower-dimensional interactions; and second, demonstrating how the combination technique leverages these expansions to systematically cancel lower-order terms, leading to enhanced convergence rates and accuracy.

We consider linear PDEs on $I^d = [0, 1]^d$ and denote by $\partial_i u$ the partial derivative of $u$ with respect to $x_i$. For a multi-index $\alpha = (\alpha_1, \ldots, \alpha_d)$, let $D^{\alpha} u = \partial_1^{\alpha_1} \cdots \partial_d^{\alpha_d} u$. 





We now present the key result underlying the error analysis, exemplified by the Poisson problem, which shows that the total finite‐difference and interpolation error can be decomposed into a sum of lower‐dimensional contributions. 

\begin{thm}[Error Expansion of Finite Differences for Poisson problem]
\label{thm:error_expansion}
Let $u$ be a solution to the Poisson problem on $I^d$ such that for all $\alpha \in \{0, \ldots, 4\}^d$, $D^{\alpha} u$ is continuous, $\|D^{\alpha} u \|_{\infty} \leq K$ and $D^{\alpha} u\vert_{\partial I^d}=0$.

Let $\mathbf{u}_h$ denote the standard central finite difference approximation of order $2$ accuracy on a tensor product grid with mesh sizes $\mathbf{h} = (h_1, \ldots, h_d) = (2^{-l_1}, \ldots, 2^{-l_d})$,
and $u(\mathbf{x}_{\mathbf{h}})$ the exact solution evaluated on the same grid. Let $\mathcal{I}(\cdot)$ denote a multilinear interpolation operator on that mesh. Then:


\begin{enumerate}
    \item \textbf{Discretization Error Expansion} (\cite[Theorem 3.3]{reisinger2013analysis}) \\
    There exist bounded $w_{j_1, \ldots, j_m}(\mathbf{x}_h; h_{j_1}, \ldots, h_{j_m})$ such that 
    \begin{equation}
    \label{eq:error_expansion}
    u(\mathbf{x}_h) - \mathbf{u}_h = \sum_{m=1}^d \sum_{\{j_1, \ldots, j_m\} \subset \{1, \ldots, d\}} w_{j_1, \ldots, j_m}(\mathbf{x}_h; h_{j_1}, \ldots, h_{j_m}) \, h_{j_1}^2 \cdots h_{j_m}^2.
    \end{equation}

    \item \textbf{Interpolation Error of the Exact Solution} (\cite[Theorem 4.2]{reisinger2013analysis}) \\
    There exist $\alpha_{j_1, \ldots, j_m}(\mathbf{x}_h; h_{j_1}, \ldots, h_{j_m})$ such that 
    \[
    u(x) - (\mathcal{I}u(x_h))(x) = \sum_{m=1}^d \sum_{\{j_1, \ldots, j_m\} \subset \{1, \ldots, d\}} \alpha_{j_1, \ldots, j_m}(x; h_{j_1}, \ldots, h_{j_m}) \, h_{j_1}^2 \cdots h_{j_m}^2,
    \]
    for all $x\in I^d$, with
    \[
\|\alpha_{j_1, \ldots, j_m}(\cdot; h_{j_1}, \ldots, h_{j_m})\|_{\infty} \leq \left( \frac{4}{27} \right)^m \|\partial_{j_1}^2 \ldots \partial_{j_m}^2 u\|_{\infty}.
\]

    \item \textbf{Combined Error under Interpolation} (\cite[Theorem 4.3]{reisinger2013analysis}) \\
    As a consequence, the total error of the interpolated finite difference solution satisfies
    \begin{align}\label{general_error_expansion}
    u(x) - \mathcal{I}u_h(x) = \sum_{m=1}^d \sum_{\{j_1, \ldots, j_m\} \subset \{1, \ldots, d\}} v_{j_1, \ldots, j_m}(x; h_{j_1}, \ldots, h_{j_m}) \, h_{j_1}^2 \cdots h_{j_m}^2,
    \end{align}
    for all $x\in I^d$, where each $v_{j_1, \ldots, j_m}$ is uniformly bounded, and depends on both the discretization and interpolation errors.

\end{enumerate}
\end{thm}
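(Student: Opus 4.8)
The plan is to obtain statement (3) as a direct consequence of statements (1) and (2), by splitting the total error into the interpolation error of the exact solution and the interpolated discretisation error, and then invoking the linearity and $L^\infty$-stability of the multilinear interpolation operator $\mathcal{I}$.

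First I would write, for every $x \in I^d$, the telescoping identity
\begin{equation}
u(x) - \mathcal{I}\mathbf{u}_h(x) = \bigl[u(x) - (\mathcal{I}u(\mathbf{x}_h))(x)\bigr] + \mathcal{I}\bigl[u(\mathbf{x}_h) - \mathbf{u}_h\bigr](x),
\end{equation}
which follows by adding and subtracting $(\mathcal{I}u(\mathbf{x}_h))(x)$ and using the linearity of $\mathcal{I}$ on the difference $\mathcal{I}u(\mathbf{x}_h) - \mathcal{I}\mathbf{u}_h$. The first bracket is exactly the interpolation error of the exact solution, for which statement (2) already supplies the expansion with coefficients $\alpha_{j_1,\ldots,j_m}$ and the stated supremum-norm bound.

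Then for the second term I would apply $\mathcal{I}$ to the grid-function expansion \eqref{eq:error_expansion} of statement (1). Since the monomials $h_{j_1}^2\cdots h_{j_m}^2$ are constants independent of the grid node, linearity of $\mathcal{I}$ lets them factor out, giving
\begin{equation}
\mathcal{I}\bigl[u(\mathbf{x}_h) - \mathbf{u}_h\bigr](x) = \sum_{m=1}^d \sum_{\{j_1,\ldots,j_m\}\subset\{1,\ldots,d\}} \bigl(\mathcal{I}w_{j_1,\ldots,j_m}\bigr)(x;h_{j_1},\ldots,h_{j_m})\, h_{j_1}^2\cdots h_{j_m}^2,
\end{equation}
so that each interpolated coefficient carries exactly the same power of $\mathbf{h}$ as before. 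Collecting the two contributions and defining $v_{j_1,\ldots,j_m} := \alpha_{j_1,\ldots,j_m} + \mathcal{I}w_{j_1,\ldots,j_m}$ then yields \eqref{general_error_expansion}.

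It remains to verify that each $v_{j_1,\ldots,j_m}$ is uniformly bounded in $\mathbf{h}$, which is the only step requiring genuine care; everything else is bookkeeping once (1) and (2) are in hand. The $\alpha$-part is controlled by the explicit bound of statement (2). For the $\mathcal{I}w$-part I would use that multilinear interpolation is non-expansive in the supremum norm: on each cell the interpolant is a convex combination of the $2^d$ surrounding nodal values, since the tensor-product hat basis functions are nonnegative and form a partition of unity, whence $\|\mathcal{I}\mathbf{v}\|_\infty \le \|\mathbf{v}\|_\infty$. Combined with the uniform boundedness of the $w_{j_1,\ldots,j_m}$ asserted in statement (1), this gives $\|\mathcal{I}w_{j_1,\ldots,j_m}\|_\infty \le \|w_{j_1,\ldots,j_m}\|_\infty$, and hence the uniform boundedness of $v_{j_1,\ldots,j_m}$. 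The substance of the argument is thus confirming that interpolation neither changes the order in $\mathbf{h}$ of any term (which is linearity) nor destroys the uniform bounds (which is $L^\infty$-stability).
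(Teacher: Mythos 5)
Your derivation of part (3) from parts (1) and (2) — telescoping through $\mathcal{I}u(\mathbf{x}_h)$, factoring the constants $h_{j_1}^2\cdots h_{j_m}^2$ out of the interpolation by linearity, and using the non-expansiveness of multilinear interpolation (nonnegative partition-of-unity weights) to preserve the uniform bounds — is correct and is exactly the argument behind the ``as a consequence'' in the statement. The paper itself offers no proof of this theorem, citing all three parts from the earlier reference, so your treatment is consistent with it; just be aware that parts (1) and (2), which you take as given, are the substantive results (requiring the semi-discrete error analysis and the explicit interpolation-error expansion, respectively), and your argument establishes only the combination step.
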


Having established that the discretisation error admits an expansion into lower-dimensional contributions, we now turn to the second essential part of the analysis: understanding how the combination technique exploits this structure to improve accuracy. 
The next theorem gives a rigorous
bound for general order $p$ and dimension $d$, in the spirit of earlier results for $p=2$ and $d=2,3$ in \cite{GriebelSchneiderZenger1992}.

\begin{thm}[Sparse Grid Error Bounds; Theorem 5.4 in \cite{reisinger2013analysis}]
\label{thm:error_bounds}
Assume for all $u_h$ a pointwise error expansion of the form
\[
u - u_h = \sum_{m=1}^d \sum_{\{j_1, \ldots, j_m\} \subset \{1, \ldots, d\}} v_{j_1, \ldots, j_m}(\cdot; h_{j_1}, \ldots, h_{j_m}) \, h_{j_1}^p \cdots h_{j_m}^p,
\]
where $|v_{j_1, \ldots, j_m}| \leq K$ for all $1 \leq m \leq d$ and all $\{j_1, \ldots, j_m\} \subset \{1, \ldots, d\}$.

Then, the combination solution \eqref{combi_form}
fulfills the error estimate
\[
|u - u_n| \leq \frac{2K}{(d - 1)!} \left( \frac{2^p + 1}{2^{p-1}} \right)^{d-1} (n + 2(d - 1))^{d-1} \, 2^{-pn}.
\]
\end{thm}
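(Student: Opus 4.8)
The plan is to work with the hierarchical–surplus representation $u_n = \sum_{\mathbf{l}\in\mathcal{M}_n}\delta U(\mathbf{l})$ from which \eqref{combi_form} was derived, together with the telescoping identity $\sum_{\mathbf{l}\in\mathbb{N}_0^d}\delta U(\mathbf{l}) = u$. The latter holds because summing the first–difference product $\delta_1\cdots\delta_d$ over a box $\{l_k\le L_k\}$ collapses coordinate‑by‑coordinate to $U(L_1,\ldots,L_d)$, and the assumed expansion forces $U(\mathbf{l})\to u$ as $\min_k l_k\to\infty$ (every term carries a factor $\prod 2^{-pl_j}\to 0$ with $|v|\le K$). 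Subtracting, the combination error is a tail sum
\[
u - u_n = \sum_{|\mathbf{l}|_1 > n}\delta U(\mathbf{l}).
\]
First I would substitute $U(\mathbf{l}) = u - E(\mathbf{l})$, where $E(\mathbf{l})$ collects, over nonempty subsets $S=\{j_1,\ldots,j_m\}$, the terms $T_S(\mathbf{l}) := v_{j_1,\ldots,j_m}(\cdot;2^{-l_{j_1}},\ldots,2^{-l_{j_m}})\,2^{-p(l_{j_1}+\cdots+l_{j_m})}$. Since $u$ is independent of $\mathbf{l}$, one checks $\delta u(\mathbf{l}) = 0$ for every $\mathbf{l}\neq 0$, so on $|\mathbf{l}|_1 > n\ge 0$ only the expansion survives and $u - u_n = -\sum_{|\mathbf{l}|_1 > n}\delta E(\mathbf{l})$.

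The second step is a support‑reduction observation that makes the sum tractable. Because $T_S$ depends on $\mathbf{l}$ only through the coordinates in $S$, the factor $\delta_k$ with $k\notin S$ annihilates $T_S$ whenever $l_k\ge 1$ and acts as the identity when $l_k = 0$. Hence $\delta T_S(\mathbf{l}) = 0$ unless $l_k = 0$ for all $k\notin S$, and for each $S$ with $|S|=m$ the $d$‑fold sum collapses to an $m$‑fold sum over multi‑indices supported on $S$ with coordinate sum exceeding $n$. This localises each subset's contribution to its own lower‑dimensional lattice.

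The third step sums these contributions. A naive termwise bound—expanding $\delta_1\cdots\delta_m$ into its $2^m$ shifted copies of $T_S$ and using only $|v_S|\le K$—overshoots, producing a spurious factor $(1+2^p)^m$; the sharp constant instead requires exploiting the cancellation carried by the difference operators. The mechanism I would use is partial telescoping inside the tail region: summing $\delta_m$ over the constrained range of the last active coordinate telescopes, the fully interior part (where the coordinate sum already exceeds $n$ before the last coordinate is added) contributes nothing, and only a boundary slice survives, lowering the dimension by one. Iterating, the tail collapses to a weighted count over boundary lattice points with coordinate sum close to $n$; the number of these grows like $(n+2(d-1))^{d-1}/(d-1)!$, the product of geometric weights contracts to $2^{-pn}$, and the residual geometric summations in the $d-1$ reduced directions together with the boundary values combine into the per‑direction constant $(2^p+1)/2^{p-1}$. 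Collecting the leading factor $2K$ yields the stated bound, with the full subset $m=d$ supplying the dominant term and the lower‑cardinality subsets absorbed into it.

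The step I expect to be the main obstacle is precisely this final summation. The difficulty is twofold: one must carry out the iterated telescoping carefully enough to avoid the lossy termwise estimate, tracking how the boundary slices at successive dimensions assemble into the shifted polynomial factor $(n+2(d-1))^{d-1}$; and one must bound the polynomial‑weighted geometric tails $\sum_{s>n}\binom{s+m-1}{m-1}2^{-ps}$ uniformly in $m$ and sum over all $\binom{d}{m}$ subsets so that the constant condenses to $\tfrac{2}{(d-1)!}\big(\tfrac{2^p+1}{2^{p-1}}\big)^{d-1}$. Throughout, the boundary cases where some $l_k=0$ must be checked to only improve the estimate, since there $\delta_k$ acts as the identity rather than as a genuine difference.
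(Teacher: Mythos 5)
First, a remark on the comparison: the paper does not prove this theorem itself, it quotes it as Theorem 5.4 of \cite{reisinger2013analysis}, so your attempt must be measured against the argument in that reference. Your first three steps are sound: the telescoping identity $\sum_{\mathbf{l}}\delta U(\mathbf{l})=u$, the reduction of the combination error to the tail sum $-\sum_{|\mathbf{l}|_1>n}\delta E(\mathbf{l})$ (using $\delta u(\mathbf{l})=0$ for $\mathbf{l}\neq 0$), and the support-reduction observation that $\delta T_S(\mathbf{l})$ vanishes unless $l_k=0$ for all $k\notin S$ are all correct. You are also right, and this is a genuine insight, that the naive termwise bound $|\delta T_S(\mathbf{l})|\le K(1+2^p)^{|S|}2^{-p|\mathbf{l}_S|_1}$ is too lossy: summed over the tail it produces a leading constant of order $(1+2^p)^d/(2^p-1)$, which exceeds the stated $2\bigl(\tfrac{2^p+1}{2^{p-1}}\bigr)^{d-1}$ by roughly a factor $2^{(p-1)(d-1)}$, so some cancellation beyond $|v_S|\le K$ must be exploited.

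The gap is in your final step, which you yourself flag as the obstacle. Telescoping the tail in one coordinate does work and gives $-\sum_{|\mathbf{l}'|_1\le n}[\delta_1\cdots\delta_{m-1}T_S](\mathbf{l}',\,n-|\mathbf{l}'|_1)$, but this removes only one factor: for $|S|=d$ one still gets $(1+2^p)^{d-1}\binom{n+d-1}{d-1}2^{-pn}$, larger than the claimed bound by about $2^{(p-1)(d-1)}/2$, which exceeds $1$ already for $p=2$, $d\ge 3$. The iteration you then invoke breaks down: after the first telescoping the last coordinate is pinned to $n-|\mathbf{l}'|_1$, so the next difference operator acts on a function whose remaining argument shifts with the summation variable; the resulting anti-diagonal sums of differences do not telescope coordinate-wise, and you supply no mechanism to recover the cancellation. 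The argument that actually delivers the stated constants (and underlies Theorem 5.4 of \cite{reisinger2013analysis}) substitutes the expansion directly into \eqref{combi_form}: since $\sum_{i}a_i\,\#\{|\mathbf{l}|_1=n+i\}=1$, one has $u-u_n=\sum_i a_i\sum_{|\mathbf{l}|_1=n+i}E(\mathbf{l})$, and for a subset $S$ with $|S|=m<d$ and a fixed $\mathbf{l}_S$ with $|\mathbf{l}_S|_1\le n$ the total weight multiplying $T_S(\mathbf{l}_S)$ is $\sum_{i=0}^{d-1}a_i\binom{n+i-|\mathbf{l}_S|_1+d-m-1}{d-m-1}$, a $(d-1)$-th order finite difference of a polynomial of degree $d-m-1<d-1$ in $i$, hence zero. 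Only the shells $n<|\mathbf{l}_S|_1\le n+d-1$ survive, and bounding those termwise with $\sum_i\binom{d-1}{i}2^{-pi}=(1+2^{-p})^{d-1}$ and $\binom{n+i+d-1}{d-1}\le (n+2(d-1))^{d-1}/(d-1)!$ yields exactly the constants in the statement, with the remaining factor $2^{d-1}\cdot 2$ absorbing the subsets with $m<d$. I recommend replacing your iterated-telescoping step by this polynomial-annihilation argument.
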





An extension of this methodology to a fourth order monotone scheme with cubic spline interpolation is
considered in \cite{hendricks2017error}. Earlier experimental work on higher order sparse grid schemes applied to option pricing include \cite{leentvaar2006pricing} and \cite{hendricks2016high}.



Our approach differs crucially in the following aspect: rather than combining approximations from higher order schemes using the standard combination formula, we modify the weights in the combination technique to improve the order of accuracy for low order schemes. This allows for an implementationally trivial and computationally efficient modification of existing standard code.

The optimal choice of weights in an \emph{a priori} and \emph{a posteriori} sense is discussed in  
\cite{hegland2007combination}, including the `opticom' method, which finds the best approximation of a given function among linear combinations of projections onto subspaces.

More closely related to our framework below,
\cite{harding2014combination} assumes a so-termed order $(q,p)$ expansion of the form
\begin{equation}\label{expansion_harding}
u - u_{\mathbf{h}} = \sum_{m=1}^d \sum_{\substack{\{j_1, \ldots, j_m\} \\ \subset \{1, \ldots, d\} } } 
\eta_{j_1, \ldots, j_m} h_{j_1}^q \ldots h_{j_m}^q +
\gamma_{j_1, \ldots, j_m} (h_{j_1}, \ldots, h_{j_m}) h_{j_1}^p \ldots h_{j_m}^p,
\end{equation}
for some positive integers $p>q$,
and then solves a linear program to determine the optimal combination weights based on upper bounds on the expansion terms. 
Subsequently, \cite{harding2016adaptive} extended the results to different orders in different dimensions and embedded the extrapolation in an adaptive sparse grid combination technique.
Our analysis provides an expansion similar to \eqref{expansion_harding} in the case $q=2, p=4$, and can be seen
as a theoretical justification for the approach in \cite{harding2014combination, harding2016adaptive}.



\subsection{Main results of this paper}

   Let $\mathbf{u}_\mathbf{h}$ as before denote the finite difference approximation,
and
let \(u_{\mathbf{h}}\) denote the piecewise multilinear interpolant of $\mathbf{u}_\mathbf{h}$,
on the tensor‐product grid with mesh‐sizes \(\mathbf{h}=(h_1,\dots,h_d)\).


The combination technique is positioned in the context of multivariate extrapolation methods in \cite{bungartz1994extrapolation}.
In this work, we use multivariate extrapolation within the combination technique to increase the order.

Define an extrapolated solution
\begin{equation}\label{eq:extra}
\tilde{u}_{\mathbf{h}} = \sum_{k=0}^d \alpha_k \sum_{ \substack{ \{i_1, \ldots, i_k\} \\ \subset \{1, \ldots, d\} } } u_{\mathbf{h}}^{(i_1, \ldots, i_k)}    
\end{equation}
where $\alpha_k = \frac{(-4)^k}{(-3)^d}$, and, for $\mathbf{i} = (i_1, \ldots, i_k)$,
\[
\mathbf{u}_\mathbf{h}^{\mathbf{i}} \coloneqq \mathbf{u}_{\mathbf{h}^{\mathbf{i}}}, \quad
u_\mathbf{h}^{\mathbf{i}} \coloneqq u_{\mathbf{h}^{\mathbf{i}}},
\qquad \text{ where } \mathbf{h}^{\mathbf{i}} = (h_1^{\mathbf{i}}, \ldots, h_d^{\mathbf{i}}) \text{ with } 
h_j^{\mathbf{i}} = \begin{cases}
       h_j/2, & j \in \{i_1, \ldots, i_k\}, \\
       h_j, & \text{otherwise. }
   \end{cases}
\]
That is to say, the mesh width for the discrete solution $\mathbf{u}_\mathbf{h}^{\mathbf{i}}$
and interpolated solution ${u}_\mathbf{h}^{\mathbf{i}}$ have been refined by bisection in dimensions
$i_1, \ldots, i_k$.

\begin{rem}
    In dimension $d=1$, the extrapolation \eqref{eq:extra} is equivalent to Richardson extrapolation.
For $d\ge 2$, the extrapolation requires meshes with up to $d+1$ uni-directional refinements.
When we build a sparse combination solution below, which itself is a linear combination of solution on meshes from $d$ refinement levels, requires meshes from $2 d$ levels.

We can contrast this to the `splitting extrapolation' scheme of \cite{schuller1985efficient}, in two dimensions
\begin{equation}
u_{\mathrm{SL}} =  \frac{4}{3} u_{\mathbf{h}}^{(1)} + \frac{4}{3} u_{\mathbf{h}}^{(2)} -\frac{5}{3} u_{\mathbf{h}},
\end{equation}
which improves a second order error in $h_1$ and $h_2$ to order $h_1^4 + h_2^4 + h_1^2 h_2^2$.
This is of fourth order for a full mesh with $h_1 = h_2$, but the term $h_1^2 h_2^2$ prevents the higher order expansion
required for applying Theorem \ref{thm:error_bounds} with $p=4$.
\end{rem}

The following theorem establishes that the extrapolation formula \eqref{eq:extra}, which is to our knowledge novel\footnote{The formula is included in the PhD thesis \cite{Reisinger2004} (in German) of one of the authors, without the supporting convergence analysis provided here in Section \ref{sec:Chapter 5}.}, 
has the desired effect of canceling all terms that are purely of second order.

\begin{thm}\label{HOexp}

Assume there exist functions 
        \(
\beta_j\;\in\;L^\infty\bigl(I^d\times[0,1]^{d-1}\bigr),\;\gamma_{j_1,\dots,j_m}\;\in\;L^\infty\bigl(I^d\times[0,1]^m\bigr)\)
 such that the finite difference error satisfies the expansion
\begin{equation}\label{expansion_intro}
u - u_{\mathbf{h}} = \sum_{j=1}^d \beta_j(\cdot; h_1, \ldots, h_{j-1}, h_{j+1}, \ldots, h_d) h_j^2 + \sum_{m=1}^d \sum_{\substack{\{j_1, \ldots, j_m\} \\ \subset \{1, \ldots, d\} } } \gamma_{j_1, \ldots, j_m} (\cdot; h_{j_1}, \ldots, h_{j_m}) h_{j_1}^4 \ldots h_{j_m}^4
\end{equation}
for all $h_j$ that are negative powers of 2, and with $\| \gamma_{j_1, \ldots, j_m}\|_{\infty} \leq K$. 
Then 
$$
u - \tilde{u}_{\mathbf{h}} = \sum_{m=1}^d \sum_{\substack{\{j_1, \ldots, j_m\} \\ \subset \{1, \ldots , d\} } } \tilde{\gamma}_{j_1, \ldots, j_m}(\cdot; h_{j_1}, \ldots, h_{j_m}) h_{j_1}^4 \ldots h_{j_m}^4
$$
with $\| \tilde{\gamma}_{j_1, \ldots, j_m}\|_{\infty} \leq \frac{5^d}{3^d}K$. 
\end{thm}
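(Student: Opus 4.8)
The plan is to recognise that the extrapolation operator defining $\tilde u_{\mathbf h}$ in \eqref{eq:extra} factorises as a tensor product of one-dimensional Richardson extrapolations, and then to track the action of this operator on each block of the expansion \eqref{expansion_intro}. For each direction $j$ let $T_j$ denote the operator that bisects the mesh in direction $j$ (so $T_j u_{\mathbf h} = u_{\mathbf h}^{(j)}$), and set $R_j := -\tfrac13 + \tfrac43 T_j$, the classical Richardson combination annihilating a pure $h_j^2$ error. Expanding $\prod_{j=1}^d R_j = \sum_{S\subset\{1,\dots,d\}} (-\tfrac13)^{d-|S|}(\tfrac43)^{|S|}\,T_S$, where $T_S$ bisects in all directions of $S$, and checking the coefficient identity $(-\tfrac13)^{d-|S|}(\tfrac43)^{|S|} = (-4)^{|S|}/(-3)^d = \alpha_{|S|}$, I would first establish $\tilde u_{\mathbf h} = \prod_{j=1}^d R_j\, u_{\mathbf h}$. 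The binomial identity $\sum_{S}(-\tfrac13)^{d-|S|}(\tfrac43)^{|S|}=(-\tfrac13+\tfrac43)^d=1$ shows the weights sum to one, so $\prod_j R_j$ reproduces any mesh-independent quantity and in particular fixes the exact solution $u$; hence $u - \tilde u_{\mathbf h} = \prod_{j=1}^d R_j\,(u - u_{\mathbf h})$, and it suffices to apply $\prod_j R_j$ to the right-hand side of \eqref{expansion_intro}. Because the $R_j$ only introduce further bisections, the expansion is needed on all refined meshes, which is precisely why it is assumed for all $h_j$ that are negative powers of $2$.

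The heart of the argument is the cancellation of the second order block $\sum_{j} \beta_j(\cdot;\mathbf h_{-j})\, h_j^2$ (writing $\mathbf h_{-j}$ for the mesh widths other than $h_j$), and this is exactly where the hypothesis that $\beta_j$ does not depend on $h_j$ is used. Fixing $i$ and using that the $R_j$ commute, I would apply $\prod_{j\neq i} R_j$ first to $\beta_i(\cdot;\mathbf h_{-i})\, h_i^2$: these operators act only in directions $j\neq i$ and leave $h_i^2$ untouched, producing $\big(\prod_{j\neq i}R_j\,\beta_i\big)\,h_i^2$, whose coefficient still carries no $h_i$ dependence. Applying the remaining factor $R_i$ then gives $\big(\prod_{j\neq i}R_j\,\beta_i\big)\,R_i[h_i^2] = 0$, since $R_i[h_i^2] = -\tfrac13 h_i^2 + \tfrac43(h_i/2)^2 = 0$. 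Thus every second order term is annihilated and no second order contribution survives in $u-\tilde u_{\mathbf h}$.

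It then remains to process the fourth order block and to bound the resulting coefficients. A term $\gamma_J(\cdot;(h_j)_{j\in J})\prod_{j\in J}h_j^4$ with $J=\{j_1,\dots,j_m\}$ is mapped by $T_S$ to $\gamma_J(\cdot;(h_j^S)_{j\in J})\,16^{-|S\cap J|}\prod_{j\in J}h_j^4$, so the monomial $\prod_{j\in J}h_j^4$, and hence the index set $J$, is preserved and distinct sets $J$ never mix. Collecting gives a new coefficient $\tilde\gamma_J = \sum_{S\subset\{1,\dots,d\}} (-\tfrac13)^{d-|S|}(\tfrac43)^{|S|}\,16^{-|S\cap J|}\,\gamma_J(\cdot;(h_j^S)_{j\in J})$. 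Splitting $S=S_1\sqcup S_2$ with $S_1\subset J$ and $S_2\subset J^c$, the sum over $S_2$ collapses by the same binomial identity, $\sum_{S_2\subset J^c}(-\tfrac13)^{(d-m)-|S_2|}(\tfrac43)^{|S_2|}=1$, leaving $\tilde\gamma_J = \sum_{S_1\subset J}(-\tfrac13)^{m-|S_1|}(\tfrac43)^{|S_1|}16^{-|S_1|}\gamma_J(\cdot;(h_j^{S_1})_{j\in J})$, which depends only on $(h_j)_{j\in J}$ as required. Using $16^{-|S_1|}\le 1$ together with $\|\gamma_J\|_\infty\le K$ then yields $\|\tilde\gamma_J\|_\infty \le K\sum_{S_1\subset J}(\tfrac13)^{m-|S_1|}(\tfrac43)^{|S_1|} = (\tfrac53)^m K \le (\tfrac53)^d K = \tfrac{5^d}{3^d}K$.

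I expect the main obstacle to be careful bookkeeping rather than any deep difficulty: matching the combinatorial weights $\alpha_k$ to the tensor-product Richardson coefficients, and verifying that applying the $R_j$ to the fourth order block preserves both the index set $J$ and the order in each direction while only rescaling coefficients. The one genuinely delicate point, and the place where the specific structure of \eqref{expansion_intro} established in Section \ref{sec:Chapter 5} is indispensable, is that the coefficient $\beta_j$ of $h_j^2$ must be independent of $h_j$; without this property $R_j$ would fail to annihilate the term, and the "extrapolate then combine" strategy would not produce a pure fourth order expansion.
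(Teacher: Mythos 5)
Your proposal is correct, and the overall strategy coincides with the paper's: check that the weights sum to one, show that every term carrying a bare factor $h_j^2$ is annihilated, and then track how the pure fourth--order terms are rescaled and bound the resulting coefficients by the binomial theorem. The one real difference is in how the cancellation is established. The paper isolates it in Lemma~\ref{lem:cancel}, whose proof groups the subsets $S$ by their cardinality $k$ and telescopes the alternating sum $\sum_k(-1)^k(c_{k-1}+c_k)$; you instead factor the extrapolation operator as $\prod_{j=1}^d R_j$ with $R_j=-\tfrac13+\tfrac43 T_j$ and reduce the cancellation to the one--dimensional identity $R_i[h_i^2]=0$, using that $\beta_i$ is independent of $h_i$ so that the remaining factors $\prod_{j\neq i}R_j$ commute past $h_i^2$. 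These are the same computation (the telescoping is exactly the factored sum $\sum_{i_1\in\{0,1\}}(-1)^{i_1}=0$ in disguise), but your packaging is more transparent: it explains \emph{why} the weights $\alpha_k=(-4)^k/(-3)^d$ arise (as tensor products of univariate Richardson weights, rather than being conjectured from small $d$ as in Section~\ref{sec:Chapter 4}), it makes the factorisation of $\tilde\gamma_J$ over $S_1\subset J$ and $S_2\subset J^c$ immediate, and it yields the marginally sharper bound $\|\tilde\gamma_{j_1,\dots,j_m}\|_\infty\le(5/3)^mK$ (indeed $(5/12)^mK$ if the factor $16^{-|S_1|}$ is retained), which implies the stated $(5/3)^dK$. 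You also correctly identify the two hypotheses that carry the argument: the validity of \eqref{expansion_intro} with the \emph{same} coefficient functions on all dyadically refined meshes, and the independence of $\beta_j$ from $h_j$.
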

\begin{proof}
    The proof is given in Section \ref{sec:Chapter 4}.
\end{proof}

\begin{rem}
We will argue below that any sufficiently smooth symmetric function of $h_1, \ldots, h_d$ which is 0 at the origin has
an expansion as on the right-hand side \eqref{expansion_intro}. A formal way to see this is to assume a
full Taylor expansion in $h_1^2,\ldots, h_d^2$ and first group all terms that contain a factor $h_j^2$ for at least one $j$ in the first sum, and then collect all remaining terms that are functions of $h_1^4,\ldots, h_d^4$ only, in the second sum.
\end{rem}

Theorem \ref{HOexp} brings us into the scope of Theorem \ref{thm:error_bounds} and allows us to define a higher order sparse grid solution.
Replacing $u_{\mathbf{h}}$ by $\tilde{u}_{\mathbf{h}}$ in the combination formula, and regrouping by $\mathbf{i}$,
we obtain the higher order combination solution at level $n$ as
\[
\hat{u}_n \coloneqq \sum_{l=n}^{n+d-1} a_{l-n} \sum_{\substack{|\mathbf{j}| = l \\\mathbf{h} = 2^{-\mathbf{j}} }} \tilde{u}_{\mathbf{h}}
=
\sum_{l= n}^{n+2d-1} \sum_{ \substack{ |\mathbf{i}| = l \\ \mathbf{h} = 2^{-\mathbf{i}}} } b_l u_{\mathbf{h}}
\]
with $b_l = \sum_{j = \max \{0, l-n-d\}}^{ \min \{l-n, d-1\} } a_j \alpha_{l-n-j} \binom{N_l}{l-n-j}$ and $\alpha_k = \frac{(-4)^k}{(-3)^d}$,  
    $N_l = \binom{m}{l} \binom{d-m}{k-l}$.
    A more detailed derivation is given in Section \ref{sec:Chapter 4}.

We can then write the error bounds for the higher order combination solution. Having established Theorem \ref{HOexp}, the following result is a direct consequence of Theorem \ref{thm:error_bounds}. 

\begin{cor}[Error bounds for the Higher Order Combination Solution]


Assume the setting of Theorem \ref{HOexp}, in particular that $u_{\mathbf{h}}$ satisfies \eqref{expansion_intro}.

Then \[|u - \hat{u}_n| \leq \frac{10}{3} \frac{K}{(d-1)!}\left( \frac{85}{24}\right)^{d-1} (n+2d-1)^{d-1} 2^{-4n}.\] 
\end{cor}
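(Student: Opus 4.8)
The plan is to derive the corollary as an immediate specialisation of Theorem \ref{thm:error_bounds} applied to the extrapolated family $\tilde{u}_{\mathbf{h}}$ in place of $u_{\mathbf{h}}$. The structural point that makes this legitimate is that, by the regrouping carried out above, $\hat{u}_n$ is exactly the standard combination formula \eqref{combi_form} with each $u_{\mathbf{h}}$ replaced by $\tilde{u}_{\mathbf{h}}$; consequently the combination weights $a_i$ are unchanged and Theorem \ref{thm:error_bounds} applies verbatim, provided $\tilde{u}_{\mathbf{h}}$ carries an error expansion of the form that theorem requires.

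First I would invoke Theorem \ref{HOexp}. Under the hypothesised second order expansion \eqref{expansion_intro}, it delivers
\[
u - \tilde{u}_{\mathbf{h}} = \sum_{m=1}^d \sum_{\substack{\{j_1, \ldots, j_m\} \\ \subset \{1, \ldots, d\}}} \tilde{\gamma}_{j_1, \ldots, j_m}(\cdot; h_{j_1}, \ldots, h_{j_m}) \, h_{j_1}^4 \cdots h_{j_m}^4, \qquad \|\tilde{\gamma}_{j_1, \ldots, j_m}\|_{\infty} \leq \frac{5^d}{3^d} K.
\]
This is precisely the pointwise expansion demanded in Theorem \ref{thm:error_bounds} with the order $p = 4$ and with the uniform coefficient bound $\tfrac{5^d}{3^d} K$ playing the role of $K$. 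I would take care here to verify that the match is exact: the sum runs over all nonempty subsets, every coefficient is uniformly bounded, and each factor appears to the fourth power.

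Next I would substitute $p = 4$ and $K \mapsto \tfrac{5^d}{3^d} K$ into the conclusion of Theorem \ref{thm:error_bounds} and simplify. The geometric base becomes $\tfrac{2^4 + 1}{2^3} = \tfrac{17}{8}$, while the prefactor splits as $2 \cdot \tfrac{5^d}{3^d} = \tfrac{10}{3}\left(\tfrac{5}{3}\right)^{d-1}$, so that
\[
2 \cdot \frac{5^d}{3^d} \left(\frac{2^4+1}{2^3}\right)^{d-1} = \frac{10}{3}\left(\frac{5}{3}\right)^{d-1}\left(\frac{17}{8}\right)^{d-1} = \frac{10}{3}\left(\frac{85}{24}\right)^{d-1},
\]
which reproduces the claimed constant. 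For the polynomial-in-$n$ factor, a direct application of Theorem \ref{thm:error_bounds} at level $n$ yields $(n + 2(d-1))^{d-1}$; since $n + 2(d-1) = n + 2d - 2 \le n + 2d - 1$, I would invoke this monotonicity to replace it by the slightly looser, and hence still valid, factor $(n + 2d - 1)^{d-1}$ stated in the corollary.

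The proof has no genuine analytical obstacle, since the hard content — the existence and precise form of the fourth order expansion of $\tilde{u}_{\mathbf{h}}$ — is exactly what Theorem \ref{HOexp} supplies. The only points requiring care are bookkeeping ones: confirming that $\hat{u}_n$ is literally the level-$n$ combination formula evaluated on $\tilde{u}_{\mathbf{h}}$, and tracking the constant $\tfrac{5^d}{3^d} K$ faithfully through the prefactor so that the $\tfrac{85}{24}$ base emerges correctly.
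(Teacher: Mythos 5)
Your proposal is correct and follows exactly the route the paper intends: the paper itself presents the corollary as a direct consequence of Theorem \ref{HOexp} combined with Theorem \ref{thm:error_bounds}, and your substitution $p=4$, $K\mapsto \tfrac{5^d}{3^d}K$ correctly reproduces the constant $\tfrac{10}{3}\bigl(\tfrac{85}{24}\bigr)^{d-1}$, with the passage from $(n+2d-2)^{d-1}$ to $(n+2d-1)^{d-1}$ justified by monotonicity. No gaps.
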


It remains to show that the premise \eqref{expansion_intro} is valid, which is the technically most involved part of the analysis. In Section \ref{sec:Chapter 5} we give a generic proof under certain natural assumptions on the truncation error of the discretisation scheme and of some semi-discrete auxiliary schemes (Assumption \ref{assumption}), in addition to assumptions on the smoothness and stability of solutions.
These have to be verified for specific applications. We do so for the Poisson problem in Section \ref{sec:appl}.

We have the following:



\begin{thm}\label{mainresult}
    
    
Under Assumptions \ref{newasumption} and \ref{assumption},
    there exist uniformly bounded functions $\beta_j$ and $\gamma_{j_1,\dots,j_m}$   
 such that the finite difference error satisfies, for all $h_j$ that are negative powers of 2, the expansion
    \begin{align}\label{HOexpansion}
    u(\mathbf{x}_h) - \mathbf{u}_h = \sum_{j=1}^d h_j^2 \beta_j(\mathbf{x}_h;h_1, \ldots, h_{j-1}, h_{j+1}, \ldots, h_d) + \sum_{m=1}^d \sum_{\substack{ \{ j_1, \ldots, j_m \} \\ \subset \{ 1, \ldots, d \} }} h_{j_1}^4 \ldots h_{j_m}^4 \gamma_{j_1, \ldots, j_m} (\mathbf{x}_h; h_{j_1}, \ldots, h_{j_m}).
    \end{align}
Here, in particular, denoting 
\(I_h^{(i_1,\ldots,i_m)}
:= \{ x \in I^d : x_{i_k} = j h_{i_k},\ 0 \le j \le h_{i_k}^{-1} \}\),
\[
\beta_j(\cdot; h_1,\ldots,h_{j-1},h_{j+1},\ldots,h_d)
\in L^\infty\bigl(I_h^{(1,\ldots,j-1,j+1,\ldots,d)}\bigr),
\quad
\gamma_{j_1,\ldots,j_m}(\cdot; h_{j_1},\ldots,h_{j_m})
\in L^\infty\bigl(I_h^{(j_1,\ldots,j_m)}\bigr)
\]
are uniformly bounded independent of $\mathbf{h}$.


\end{thm}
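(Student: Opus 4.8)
The plan is to strengthen the second order mixed error expansion underlying \cite{reisinger2013analysis} so that each mixed coefficient is not merely bounded but smooth in the squared mesh sizes, and then to regroup the expansion into the two sums of \eqref{HOexpansion} by a coordinatewise Taylor split. First I would set up the semi-discrete hierarchy: for each $S\subseteq\{1,\dots,d\}$ let $u^S_{\mathbf h}$ denote the solution obtained by applying the finite difference discretisation in the directions $j\in S$ (with mesh sizes $h_j$) and keeping the continuous operator in the remaining directions, so that $u^{\emptyset}=u$ and $u^{\{1,\dots,d\}}_{\mathbf h}=\mathbf u_{\mathbf h}$. Writing the error as a mixed telescoping (inclusion--exclusion) sum over $S$ of the increments obtained by switching on one further discretised direction, each increment solves a stable semi-discrete defect problem whose right-hand side is the direction-$j$ truncation error of the scheme applied to a semi-discrete solution. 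Using Assumption \ref{assumption} to expand this truncation error beyond leading order, together with the stability and smoothness of Assumption \ref{newasumption}, the outcome is a refined expansion
\[
u(\mathbf x_h) - \mathbf u_{\mathbf h} = \sum_{\emptyset \neq S \subseteq \{1,\dots,d\}} \Big(\prod_{j\in S} h_j^2\Big)\, w_S\bigl(\mathbf x_h; (h_j^2)_{j\in S}\bigr),
\]
in which each $w_S$ is, uniformly in $\mathbf h$, twice continuously differentiable in the variables $t_j := h_j^2$ with derivatives bounded by $K$. The new ingredient compared with the bounded expansion of Theorem \ref{thm:error_expansion} is carrying the truncation-error and stability estimates one order further, so that the coefficients inherit controlled mesh dependence rather than mere boundedness.

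Second I would separate the orders by a commuting Taylor decomposition in the $t_j$. For each $j$ write the identity, acting on the $t_j$-dependence, as $\mathrm{Id}=A_j+B_j+C_j$, where $A_j$ evaluates at $t_j=0$, $B_j$ extracts the term linear in $t_j$, and $C_j$ is the second-order Taylor remainder, which carries a factor $t_j^2=h_j^4$ and is bounded through $\sup|\partial_{t_j}^2(\cdot)|$. These operators commute, and $W_S := \big(\prod_{j\in S} t_j\big) w_S$ vanishes whenever $t_j\to 0$ for some $j\in S$, so applying $\prod_{j\in S}(A_j+B_j+C_j)$ annihilates every term containing an $A_j$ and gives $W_S = \sum_{B\sqcup C = S}\big(\prod_{j\in B} B_j \prod_{j\in C} C_j\big)W_S$. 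In the $(B,C)$ term each $j\in B$ contributes a single factor $h_j^2$, each $j\in C$ contributes $h_j^4$, the coefficient depends only on $(h_j)_{j\in C}$, and everything is uniformly bounded by Step 1. Collecting all terms with $B=\emptyset$ into $\gamma_S$ (with $S=C$), and assigning each term with $B\neq\emptyset$ to $\beta_{\min B}$ after factoring out the single $h_{\min B}^2$ (the remaining mesh-size dependence being uniformly bounded), produces exactly the two sums of \eqref{HOexpansion}; tracking the constants gives $\|\gamma_{j_1,\dots,j_m}\|_\infty\le K$, while the empty split $B=C=\emptyset$ contributes $u-u=0$.

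The hard part will be Step 1, namely establishing the uniform-in-$\mathbf h$ second-order $t$-regularity of the mixed coefficients $w_S$. This requires differentiating the semi-discrete defect equations with respect to the mesh parameters and re-applying the stability estimate of Assumption \ref{newasumption} to the differentiated problems, while simultaneously controlling the higher-order remainders in the truncation-error expansion of Assumption \ref{assumption} uniformly across the entire semi-discrete hierarchy; this is the technically most involved part, carried out in Section \ref{sec:Chapter 5}. The bookkeeping in Step 2 is then routine, the only delicate point being that the apparently ``second order'' cross contributions $h_{j_1}^2 h_{j_2}^2$ do not survive as separate terms but are legitimately absorbed into the $\beta$-sum, where dependence of $\beta_j$ on the remaining mesh sizes is permitted.
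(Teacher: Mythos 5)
Your overall architecture --- a semi-discrete hierarchy indexed by subsets of coordinate directions, evenness of all coefficients in the mesh sizes, and a final Taylor regrouping in $t_j=h_j^2$ that sends every term either into a single bare $h_j^2$ factor (absorbed into $\beta_j$) or into a pure product of fourth powers (absorbed into $\gamma_{j_1,\ldots,j_m}$) --- is the same mechanism the paper uses. Your Step~2 is correct and is in fact a clean way to phrase the order separation: the identity $\mathrm{Id}=A_j+B_j+C_j$ with all $A_j$-terms annihilated by the prefactor $\prod_{j\in S}t_j$ is essentially the content of Lemma~\ref{evenexp} combined with the term classification of Lemmas~\ref{lem:base} and~\ref{technicallemma}, and assigning each $(B,C)$-term to $\beta_{\min B}$ is legitimate because the $B_{\min B}$-coefficient is evaluated at $t_{\min B}=0$ and hence independent of $h_{\min B}$.

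The gap is Step~1. The assertion that the mixed coefficients $w_S$ of the second-order expansion depend only on $(h_j)_{j\in S}$ \emph{and} are, uniformly in $\mathbf{h}$, twice continuously differentiable in the variables $t_j=h_j^2$ down to $t_j=0$ is not available from Theorem~\ref{thm:error_expansion}, which gives boundedness only; it is precisely the technically hard content of the theorem, and you state it as an intermediate claim and then defer its proof to ``Section~\ref{sec:Chapter 5}'', which in a blind attempt amounts to assuming what is to be proved. Moreover, your sketched route to Step~1 --- differentiating the semi-discrete defect equations with respect to the mesh parameters and re-applying the stability estimate to the differentiated problems --- is not what the hypotheses support: Assumptions~\ref{newasumption} and~\ref{assumption} postulate smoothness and evenness of semi-discrete solutions and of truncation-error coefficients in the mesh sizes, but give no stability estimate for $h$-differentiated problems. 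The paper never formulates your Step~1; instead it runs a two-parameter induction (Claim~\ref{claim}) in which each newly generated truncation-error term is immediately Taylor-split by Lemma~\ref{evenexp}, classified into one of three structural categories by Lemma~\ref{technicallemma}, and the nonconforming categories are cancelled by introducing further semi-discrete correction problems whose solutions inherit the required regularity directly from Assumption~\ref{newasumption}. This interleaving is what allows evenness and smoothness to be propagated by assumption rather than derived by differentiating equations. To complete your argument you would either have to carry out essentially that induction anyway (at which point your Step~2 regrouping is performed term by term rather than once at the end), or supply an actual proof of the uniform $t$-regularity of the $w_S$, which is the missing piece.
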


\begin{proof}
    The proof is given in Section \ref{sec:Chapter 5}.

\end{proof}

\begin{rem}
There are two alternatives of interpolating the finite difference solutions on the tensor meshes to
construct the sparse combination solution.
In the sequence "extrapolate -- interpolate -- combine", we require a fourth order interpolant with appropriate
error splitting to maintain the overall fourth order accuracy from the preceding extrapolation step.
It is shown in \cite{hendricks2017error} that iterative univariate cubic spline interpolation satisfies such an error  splitting
for sufficiently smooth functions.

In the sequence "interpolate -- extrapolate -- combine", the extrapolation step may lift the second order of multi-linear interpolation to fourth order, and give a fourth order sparse grid solution after combination, assuming that
an expansion as on the right-hand side of \eqref{expansion_intro} holds for the interpolation error.
We are positive that this can be proven by combining the techniques to derive the interpolation error in
\cite{reisinger2013analysis} with the techniques for the more refined expansion for the finite difference error in the proof of Theorem \ref{mainresult}. However, the lengthy analysis would significantly extend the length of this paper without any likely new insights and is hence omitted.
The numerical tests in Section \ref{sec:appl} confirm that the extrapolation and combination of a second order finite difference solution with multi-linear interpolation does indeed give fourth order convergence.
\end{rem}

In summary, by extrapolating a second‐order finite‐difference scheme to fourth‐order and then applying the sparse‐grid combination formula, we obtain a global error bound  
\[
|u - \hat u_n| = O\bigl((n+2d-1)^{d-1}2^{-4n}\bigr),
\]
while using only \(O(n^{d-1}2^n)\) degrees of freedom instead of \(O(2^{nd})\),
making the higher‐order combination solution both accurate and dimensionally efficient.






\section{Multivariate Extrapolation and Higher Order Combination}
\label{sec:Chapter 4}

The goal of this section is to construct a higher‐order sparse‐grid combination formula by first developing a fourth‐order extrapolation on Cartesian subgrids. We first apply extrapolation on the classical Cartesian grid to build higher order solutions, and only then combine them via the sparse grid formula. By a preliminary extrapolation step on each sparse subgrid with mesh sizes $\mathbf{h}$ one ensures that all mixed error terms up to the desired order vanish. 

For the extrapolation step, we can take a more abstract view.
Let $\mathbf{u}_h$ be a quantity that depends on a vector of parameters \( (h_{1}, \ldots, h_{d}) \)
and $u$ its limit as $h_k \rightarrow 0$ for all $k=1,\ldots, d$.

In the context of this paper, $u$ is the solution to a linear PDE 
on the unit cube $[0,1]^d$, and $\mathbf{u}_h$ the finite difference approximation of order 2 accuracy on a tensor product grid with mesh sizes $\mathbf{h} = (h_1, \ldots, h_d)$, with $u_{\mathbf{h}} : [0,1]^d \to \mathbb{R}$ be its multilinear interpolant, with $u_{\mathbf{h}}(\mathbf{j}\odot \mathbf{h}) = (\mathbf{u}_h)_{\mathbf{j}}$. 

In the setting of Theorem \ref{thm:error_expansion} and assuming additionally that $w$ can be expanded further,
\begin{align*}
    u - u_{\mathbf{h}} 
    & = \sum_{m=1}^d \sum_{\substack{\{j_1, \ldots, j_m\}\\ \subset \{1, \ldots, d\}} } h_{j_1}^2 \ldots h_{j_m}^2 \beta_{j_1, \ldots, j_m} + R(h),
\end{align*}
where the remainder \(R(h)\) collects the higher–order and mixed terms. 
By applying this multivariate extrapolation on each Cartesian subgrid we aim at exactly eliminating all mixed‐quadratic terms \(h_{j_1}^2 \ldots h_{j_m}^2\), so that the remaining leading error on each grid is of order \(O(h^4)\).

Recall the notation from \eqref{eq:extra} and immediately thereafter.
Then $u_{\mathbf{h}}^{(i_1, \ldots, i_k)}$ satisfies the error expansion
    \begin{align*}
    u - u_{\mathbf{h}}^{(i_1, \ldots, i_k)} = \sum_{m=1}^d \sum_{\substack{\{j_1, \ldots, j_m\} \subset \{1, \ldots, d\} } } 4^{-N(\mathbf{i}, \mathbf{j})}h_{j_1}^2 \ldots h_{j_m}^2 \beta_{j_1, \ldots, j_m} + \bar{R}(h),
    \end{align*}
    where $N(\mathbf{i}, \mathbf{j}) = \| \{i_1, \ldots, i_k\} \cap \{j_1, \ldots, j_m\} \|$, and \(\bar R(h)\) denotes the remainder term collecting all higher‐order error contributions beyond the explicit mixed‐quadratic terms.

Define 
    \begin{align*}
    N_l \coloneqq \sum_{\substack{\{i_1, \ldots, i_k\} \subset \{1, \ldots, d\} \\ | \{i_1, \ldots, i_k\} \cap \{j_1, \ldots, j_m\}| = l }} \!\!\!\!\!\!\!\!  1 \quad = \quad \binom{m}{l} \binom{d-m}{k-l}
    \end{align*}
    that counts the number of subgrids for which $N(\mathbf{i}, \mathbf{j}) = l$. 
Then 
    \begin{align*}
        \sum_{\substack{\{i_1, \ldots, i_k\} \\ \subset \{1, \ldots, d\}}} \sum_{\substack{\{j_1, \ldots, j_m\} \\ \subset \{1, \ldots, d\} }} 4^{-N(\mathbf{i}, \mathbf{j})} \beta_{j_1, \ldots, j_m} h_{j_1}^2 \ldots h_{j_m}^2 = \sum_{l=0}^d N_l 4^{-l} \sum_{\substack{ \{j_1, \ldots, j_m\} \\ \subset \{1, \ldots, d\} }} \beta_{j_1, \ldots, j_m} h_{j_1}^2 \ldots, h_{j_m}^2
    \end{align*}
    so that 

    \begin{align*}
        \sum_{\substack{ \{i_1, \ldots, i_k\} \\ \subset \{1, \ldots, d\} }} u - u_{\mathbf{h}}^{(i_1, \ldots, i_k)} = \sum_{m=1}^d \left[ \sum_{l= \max\{0, m+k-d\} }^{\min\{m,k\}} 4^{-l} N_l \right] \sum_{\substack{ \{j_1, \ldots, j_m\} \\ \subset \{1, \ldots, d\} }} \beta_{j_1, \ldots, j_m} h_{j_1}^2 \ldots h_{j_m}^2 + \bar{R}(h).
    \end{align*}

The \(m\)th order term $h_{j_1}^2 \ldots h_{j_m}^2$ will vanish only if (by swapping order of summation)  
    \begin{align*}
        \sum_{k=0}^d\alpha_k \sum_{l = \max\{ 0, m +´k-d\} }^{ \min \{m,k\} } 4^{-l} N_l = \sum_{k=0}^d \alpha_k \sum_{l = \max\{ 0, m +´k-d\} }^{ \min \{m,k\} } 4^{-l} \binom{m}{l} \binom{d-m}{k-l} = 0.
    \end{align*}
    Solving this system of linear equations for $\alpha_k$,
    $$
    \begin{cases}
        \sum_{k=0}^d \alpha_k \binom{d}{k} = 1, \\
        \sum_{k=0}^d \alpha_k \sum_{l = \max\{ 0, m +´k-d\} }^{ \min \{m,k\} } 4^{-l} \binom{m}{l} \binom{d-m}{k-l} = 0,
    \end{cases}
    $$
    we conjecture (from explicit calculation for small $d$) the closed form solution $\alpha_k = \frac{(-4)^k}{(-3)^d}$.

In the subsequent proof of Theorem \ref{HOexp}, we apply those weights in the extrapolated solution $\tilde{u}_h$ and show that all second-order mixed contributions indeed vanish, leaving only pure quartic higher-order terms in the error expansion. This constitutes the key result of the section, since it upgrades a nominally second-order discretization into an effectively fourth-order sparse-grid approximation.  
We first show the key cancellation step in a lemma.

\begin{lem}
\label{lem:cancel}
Let $\alpha_k := (-4)^k/(-3)^d$ for $0 \le k \le d$. Then, for arbitrary $\beta:\mathbb{R}^{d-1}\to\mathbb{R}$ and indices $i_j\in\{0,1\}$ for $1\le j\le d$, we have
\[
\sum_{k=0}^d \alpha_k
\!\!\!\sum_{\substack{i_1+\cdots+i_d=k\\ i_j\in\{0,1\}}}
\! 4^{-i_1}\, \beta\!\bigl(i_2,\ldots,i_d \bigr) \;=\; 0.
\]
\end{lem}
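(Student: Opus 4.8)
The plan is to recognise the nested sum over $k$ and over binary vectors of fixed weight as a single, unconstrained sum over the hypercube $\{0,1\}^d$. Since every vector $(i_1,\ldots,i_d)\in\{0,1\}^d$ has a unique coordinate sum $k=i_1+\cdots+i_d$ and appears exactly once in the inner sum indexed by that $k$, the left-hand side equals
\[
\sum_{(i_1,\ldots,i_d)\in\{0,1\}^d} \alpha_{i_1+\cdots+i_d}\, 4^{-i_1}\,\beta(i_2,\ldots,i_d).
\]
The whole difficulty, such as it is, has now been absorbed into this reindexing: the weights $\alpha_k$ depend on the vector only through its coordinate sum, which is precisely what will allow the variables to separate.

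Next I would insert the explicit form $\alpha_k=(-4)^k/(-3)^d$ and exploit the multiplicativity $(-4)^{i_1+\cdots+i_d}=\prod_{j=1}^d(-4)^{i_j}$. Pulling out the constant factor $1/(-3)^d$ and isolating the first coordinate via the simplification $(-4)^{i_1}4^{-i_1}=(-1)^{i_1}$, the sum becomes
\[
\frac{1}{(-3)^d}\sum_{(i_1,\ldots,i_d)\in\{0,1\}^d}(-1)^{i_1}\left(\prod_{j=2}^d(-4)^{i_j}\right)\beta(i_2,\ldots,i_d).
\]
The key observation is that $\beta$ depends only on $(i_2,\ldots,i_d)$, so the sole factor carrying any dependence on $i_1$ is the sign $(-1)^{i_1}$.

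This lets the $i_1$-sum factor out of the expression entirely, and I would finish by computing $\sum_{i_1\in\{0,1\}}(-1)^{i_1}=1+(-1)=0$, which forces the whole quantity to vanish irrespective of $\beta$. There is no genuine obstacle here: the statement is really a decoupling identity in which the single free coordinate $i_1$ contributes a vanishing factor $1-1$. The only point warranting care is the very first step, namely verifying that the reindexing over the hypercube is exact and that the independence of $\beta$ from $i_1$ is what legitimises the separation; once that is secured the cancellation is immediate, and it is worth emphasising that the argument uses nothing about $\beta$ beyond its not depending on the first coordinate.
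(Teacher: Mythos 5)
Your proof is correct. It reaches the same cancellation as the paper but organises it differently: the paper keeps the double sum grouped by the total weight $k$, absorbs $4^{i_2+\cdots+i_d}$ into a modified function $\tilde\beta$, and then observes that $\sum_k (-1)^k\sum_{i_1+\cdots+i_d=k}\tilde\beta(i_2,\ldots,i_d)$ telescopes because the inner sum splits into the contributions $i_1=1$ (weight $k-1$ in the remaining coordinates) and $i_1=0$ (weight $k$), with some care needed at the endpoints $k=0$ and $k=d$. You instead flatten the nested sum into a single unconstrained sum over $\{0,1\}^d$, use the multiplicativity $\alpha_{i_1+\cdots+i_d}=(-3)^{-d}\prod_j(-4)^{i_j}$ to separate variables, and factor out $\sum_{i_1\in\{0,1\}}(-1)^{i_1}=0$. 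The two arguments are the same identity viewed from different angles --- the telescoping in the paper is precisely the weight-graded shadow of your factorisation --- but your version is slightly cleaner in that it dispenses with the boundary cases of the telescope and makes explicit that the only property of $\beta$ being used is its independence of $i_1$. No gap.
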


\begin{proof}
Substituting $h_\ell = 2^{-i_\ell}$ for $\ell=2,\dots,d$ gives
\begin{align}
\sum_{k=0}^d \alpha_k \!\!\!\sum_{i_1+\cdots+i_d=k}\!\! 4^{-i_1} \,\beta(h_2,\ldots,h_d)
&= \frac{1}{(-3)^d}\sum_{k=0}^d (-4)^k \!\!\!\sum_{i_1+\cdots+i_d=k}\!\! 4^{-i_1}\,\beta(i_2,\ldots,i_d) \nonumber \\
&= \frac{1}{(-3)^d}\sum_{k=0}^d (-1)^k \!\!\!\sum_{i_1+\cdots+i_d=k}\!\! 4^{i_2+\ldots+i_d}\,\beta(i_2,\ldots,i_d) \nonumber \\
&= \frac{1}{(-3)^d}\sum_{k=0}^d (-1)^k \!\!\!\sum_{i_1+\cdots+i_d=k}\!\!
\tilde{\beta}(i_2,\ldots,i_d), \label{telescope}
\end{align}
where we have defined $\tilde{\beta}(i_2,\ldots,i_d) = 4^{i_2+\ldots+i_d}\, \beta(i_2,\ldots,i_d)$.

Since the last sum in \eqref{telescope} does not depend on $i_1$, for all $0<k<d$ we can split it as
\[
\sum_{i_1+\cdots+i_d=k}\tilde{\beta}(i_2,\ldots,i_d)
= \sum_{i_2+\cdots+i_d=k-1}\tilde{\beta}(i_2,\ldots,i_d)
+ \sum_{i_2+\cdots+i_d=k}\tilde{\beta}(i_2,\ldots,i_d),
\]
and similarly at the beginning and end of the range.
Hence \eqref{telescope} is a telescoping sum, which proves the claim.
\end{proof}

\begin{proof}

[of Theorem \ref{HOexp}]

First, a zero order consistency condition holds:
\[
\sum_{k=0}^d \alpha_k \binom{d}{k}
= \frac{1}{(-3)^d}\sum_{k=0}^d (-4)^k \binom{d}{k}
= \frac{(1-4)^d}{(-3)^d} = 1.
\]
The remainder follows from Lemma~\ref{lem:cancel} and
\begin{align*}
u - \tilde u_h
&= \sum_{m=1}^d \;\sum_{\substack{\{j_1,\ldots,j_m\} \\ \subset\{1,\ldots,d\}}}
\sum_{k=0}^d \alpha_k
\!\!\!\sum_{\substack{\sum_\ell i_\ell = k\\ i_\ell\in\{0,1\}}}
\gamma_{j_1,\ldots,j_m}\!\bigl(h_{j_1+i_{j_1}},\ldots,h_{j_m+i_{j_m}}\bigr)
\prod_{r=1}^m h_{j_r+i_{j_r}}^{4} \\
&= \sum_{m=1}^d \;\sum_{\substack{\{j_1,\ldots,j_m\} \\ \subset\{1,\ldots,d\}}}
\biggl(\prod_{r=1}^m h_{j_r}^{4}\biggr)
\underbrace{\sum_{k=0}^d \frac{(-4)^k}{(-3)^d}
\!\!\!\sum_{\substack{\sum_\ell i_\ell = k\\ i_\ell\in\{0,1\}}}
\gamma_{j_1,\ldots,j_m}\!\bigl(h_{j_1}2^{-i_{j_1}},\ldots,h_{j_m}2^{-i_{j_m}}\bigr)
\, 16^{-(i_{j_1}+\cdots+i_{j_m})}}_{=:~\tilde\gamma_{j_1,\ldots,j_m}(h_{j_1},\ldots,h_{j_m})}.
\end{align*}
Estimating the coefficient gives
\[
\bigl|\tilde\gamma_{j_1,\ldots,j_m}\bigr|
\;\le\; \sum_{k=0}^d \frac{K\, 4^k}{3^d}\binom{d}{k}
\;=\; K\,\frac{(1+4)^d}{3^d}
\;=\; K\,\frac{5^d}{3^d}.
\qedhere
\]

\end{proof}

\section{Error Expansion}\label{sec:Chapter 5}

In this section, we prove our main result, Theorem~\ref{mainresult}.
A formal proof (i.e.\ without detailed verification of the required regularity and other properties) for the case $d=2$ is given in Appendix \ref{proof-2d}. It highlights the general principle of the proof but avoids some of the more technical steps of the general case.

We present the arguments in an operator–theoretic framework so that it applies beyond any specific model, and we specialize afterwards to finite–difference discretizations of linear boundary value problems (in particular, the Poisson problem; see Section \ref{sec:appl}).

Let $A: C^{\infty}([0,1]^d, \mathbb{R}) \rightarrow C^{\infty}([0,1]^d, \mathbb{R})$ be some linear operator. Fix a vector of mesh sizes $\mathbf{h}$, let $I_{h_j}:=\{0,h_j,2h_j,\ldots,1\}$ and $I_h:=\prod_{j=1}^d I_{h_j}$, and denote by $\mathcal X_h$ the space of grid functions on $I_h$. Let $A_h:\mathcal X_h\rightarrow \mathcal X_h$ be a second–order discretization of $A$.
For some regular enough function $f:[0,1]^d \rightarrow \mathbb{R}$, let  $u\in C^\infty(I^d)$ solve $Au=f$, and let $u_h\in\mathcal X_h$ solve $A_h u_h=f_h$, where $f_h$ is the standard restriction of $f$ to $I_h$.

We interpret $u_h$ as the grid approximation of $u$ on $I_h$. Then the goal of this section is to establish error expansions of the form 
\begin{align}\label{exp}
    u - u_h = \sum_{j=1}^d h_j^2 \beta_j(h_1, \ldots, h_{j-1}, h_{j+1}, \ldots, h_d) + \sum_{m=1}^d \sum_{\substack{ \{ j_1, \ldots, j_m \} \\ \subset \{ 1, \ldots, d \} }} h_{j_1}^4 \ldots h_{j_m}^4 \gamma_{j_1, \ldots, j_m} (h_{j_1}, \ldots, h_{j_m}). 
\end{align} 

A crucial first step for understanding the error $u - u_h$ in finite difference schemes is typically to consider the truncation error $A_h [u-u_h] = A_h u - f_h$. Consequently, expanding this residual in powers of the mesh sizes is a key ingredient of our error analysis.
Then the global discretisation error can be viewed as the discrete operator's inverse acting on this local error. One might then naively invert $A_h$ to obtain the desired error expansion for $u - u_h$. 
However, since $A_h^{-1}$ depends non-trivially on the mesh-sizes $(h_1, \ldots, h_d)$, this approach cannot directly yield an expansion of the desired form \eqref{exp}.

To overcome this obstable, we introduce a theoretical error correction scheme. We will prove by induction
the following claim: 
    \begin{claim}\label{claim}
        For every $1 \leq m \leq d$ and $0 \leq n \leq m$, there exist functions $\beta_{j}^{(n,m)}(\cdot; h_{1}, \ldots, h_{j-1}, h_{j+1}, \ldots , h_{d})$, $\beta_{j, j_2, \ldots, j_k}^{(n,m)}(\cdot; h_{j_2}, \ldots, h_{j_k}), \gamma_{j_1, \ldots, j_k}^{(n,m)} (\cdot; h_{j_1}, \ldots, h_{j_k})$ and ${\sigma_{i_1, \ldots, i_l}^{k_1, \ldots, k_{m-l}}}^{(n,m)}(\cdot; h_{i_1}, \ldots, h_{i_l}) $ such that
        \begin{align}\label{equality?}
            & A_h \left[ u - \sum_{k=1}^{m-1} \sum_{\substack{ \{ j_1, \ldots, j_k \} \\ \subset \{ 1, \ldots, d \} }} h_{j_1}^4 \ldots h_{j_k}^4 \gamma^{(n,m)}_{j_1, \ldots, j_k} (h_{j_1}, \ldots, h_{j_k}) -  \sum_{j = 1}^d h_{j}^2 \beta_{j}^{(n,m)} (h_{1}, \ldots, h_{j-1}, h_{j+1}, \ldots, h_{d}) \right] - f_h \notag \\
            &= \sum_{k=m+1}^{d} \sum_{\substack{ \{ j_1, \ldots, j_k \} \\ \subset \{ 1, \ldots, d \} }} h_{j_1}^4 \ldots h_{j_k}^4 \gamma^{(n,m)}_{j_1, \ldots, j_k} ( h_{j_1}, \ldots, h_{j_k}) + \sum_{k=m+1}^{d} \sum_{\substack{\{j\} \sqcup \{ j_2, \ldots, j_k\} \\ \subset \{1, \ldots, d\} } } h_j^2 h_{j_2}^2 \ldots h_{j_k}^2 \beta_{j, j_2, \ldots, j_k}^{(n,m)}( h_{j_2}, \ldots, h_{j_k}) \notag \\
            & \quad + \sum_{l=n}^m \sum_{\substack{ \{i_1, \ldots, i_l\} \sqcup \{k_1, \ldots, k_{m-l}\} \\ \subset \{1, \ldots, d\} } }h_{k_1}^2 \dots h_{k_{m-l}}^2 h_{i_1}^4 \dots h_{i_l}^4 {\sigma_{i_1, \ldots, i_l}^{k_1, \ldots, k_{m-l}}}^{(n,m)}(h_{i_1}, \dots, h_{i_l}).
        \end{align}
Moreover, all functions 
\(
\beta_{j}^{(n,m)},\quad 
\beta_{j,j_2,\ldots,j_k}^{(n,m)},\quad
\gamma_{j_1,\ldots,j_k}^{(n,m)},\quad
\sigma_{i_1,\ldots,i_l}^{k_1,\ldots,k_{m-l},(n,m)}
\)
have the following properties:

\begin{itemize}
  \item They are $C^\infty$ functions in the continuous variables of the partial grids, and all their derivatives vanish on the boundary.
  \item They are even and $C^\infty$ in each mesh–size variable on which they depend.
\end{itemize}
    \end{claim}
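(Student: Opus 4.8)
The plan is to prove Claim \ref{claim} by a double induction that peels the error expansion off the truncation residual $A_h u - f_h$ one interaction order at a time, using the discrete operator $A_h$ itself, together with a family of semi-discrete auxiliary problems, rather than its inverse. The guiding observation is that since $A_h u_h = f_h$, for any correction $W$ we have $A_h[u - u_h - W] = A_h[u - W] - f_h$, so controlling the left-hand residual in \eqref{equality?} is equivalent to controlling $A_h$ applied to the still-uncorrected part of the error. The full inverse $A_h^{-1}$, whose $\mathbf{h}$-dependence would destroy the desired product structure, is then invoked only once, at the very end, on the single top-order term.

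First I would set up the base of the induction. Using Assumption \ref{assumption} on the truncation error of the symmetric tensor-product scheme, I expand $A_h u - f_h$ into an $\mathbf{h}$-series. The two structural facts needed are (i) that the expansion is even in each mesh-size $h_j$, since central differences give only even powers, and (ii) that the coefficients are smooth in the continuous variables and inherit the boundary vanishing of $u$ (via Assumption \ref{newasumption}). This yields the case $(n,m)=(0,1)$ of \eqref{equality?}, with the single-direction contributions placed in the $\sigma$-family and the genuinely mixed contributions collected in the higher-order ($k\ge 2$) $\gamma$- and $\beta$-families.

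The heart of the argument is the inductive lifting step. Given \eqref{equality?} at stage $(n,m)$, I eliminate the order-$m$ terms carrying exactly $l=n$ quartic factors. For each such monomial $h_{k_1}^2\cdots h_{k_{m-n}}^2 h_{i_1}^4\cdots h_{i_n}^4\,\sigma$ I introduce a correction equal to the same monomial times an auxiliary function $\phi$ solving a problem whose source is $\sigma$ and which is discretised in a carefully chosen subset of directions. Applying $A_h$ reproduces the target term to leading order through the auxiliary equation, while the leftover truncation error of $\phi$ — again an even higher-power series by (i) — produces precisely the same monomial with one quadratic factor promoted to a quartic (advancing $l\mapsto l+1$) together with genuinely higher interaction-order ($m+1$) terms feeding the $\gamma$- and $\beta$-families. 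The subset of directions in which $\phi$ is discretised must be chosen so that $\phi$, and hence the new coefficients, depend only on the admissible mesh-sizes; this is exactly what pins down the restricted argument lists of $\beta_j$, $\gamma_{j_1,\dots}$ and $\sigma$. Running the inner induction up to $n=m$ leaves a single pure-quartic order-$m$ term, which is relabelled into the $\gamma$-family, so the outer induction advances $m\mapsto m+1$. Closing at $(m,n)=(d,d)$ leaves only $h_1^4\cdots h_d^4\,\sigma_{1,\dots,d}$; applying $A_h^{-1}$, whose boundedness (stability, Assumption \ref{newasumption}) gives a uniformly bounded coefficient, is permitted here because this is the maximal interaction order and full dependence on $\mathbf{h}$ is allowed. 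Collecting the subtracted corrections then reads off \eqref{exp}.

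Two properties must be carried along every step by a parallel induction: smoothness in the continuous variables with all derivatives vanishing on $\partial I^d$, which follows from the regularity and boundary compatibility of the auxiliary solutions and is preserved by linearity of $A$ and multiplication by the scalar factors $h^2,h^4$; and evenness and smoothness in each mesh-size variable, inherited from the even truncation series in (i) and stable under the linear auxiliary solves. I expect the main obstacle to be the combinatorial bookkeeping in the lifting step: verifying that after introducing the corrections the residual still has exactly the three-family form of \eqref{equality?}, with the index ranges $k\ge m+1$ and $n\le l\le m$ intact and no spurious terms, while simultaneously guaranteeing that each auxiliary solution is discretised in precisely the directions needed to respect the restricted mesh-size dependence of $\beta_j$, $\gamma_{j_1,\dots,j_k}$ and $\sigma$. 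This is where the otherwise routine even-series estimates interact with the multi-index structure, and where care is needed to keep the regularity, boundary-vanishing and evenness properties of (i)--(ii) intact throughout.
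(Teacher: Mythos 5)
Your proposal follows essentially the same route as the paper: a two-parameter induction with base case $(0,1)$ obtained from the even truncation-error expansion, an inner step that cancels the order-$m$ terms with exactly $n$ quartic factors by subtracting solutions of semi-discrete problems $A_h^{(i_1,\ldots,i_n)}\hat\sigma=\sigma$ (whose own truncation errors promote one quadratic factor to a quartic and feed the higher-interaction families), an outer step $m\mapsto m+1$ by relabelling the pure-quartic residue, and a single application of the uniformly bounded $A_h^{-1}$ only at the final stage $(d,d)$, with regularity, boundary-vanishing and evenness carried along in parallel. The combinatorial regrouping you flag as the main obstacle is exactly what the paper isolates into its technical Lemma~\ref{technicallemma}, proved in the appendix.
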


Before proving the claim, we show how the expansion (\ref{exp}) and hence Theorem \ref{mainresult} directly follows from the claim. 

\begin{proof}[Proof of Theorem \ref{mainresult}]
    Taking $n = m=d$ in \eqref{equality?}, 
    \begin{align*}
            & A_h \left[ u - \sum_{k=1}^{d-1} \sum_{\substack{ \{ j_1, \ldots, j_k \} \\ \subset \{ 1, \ldots, d \} }} h_{j_1}^4 \ldots h_{j_k}^4 \gamma^{(d,d)}_{j_1, \ldots, j_k} ( h_{j_1}, \ldots, h_{j_k}) -  \sum_{j = 1}^d h_{j}^2 \beta_{j}^{(d,d)} ( h_{1}, \ldots, h_{j-1}, h_{j+1}, \ldots, h_{d}) \right] - f_h \\
            &= h_{1}^4 \ldots h_{d}^4 \gamma^{(d,d)}_{1, \ldots, d} ( h_{1}, \ldots, h_{d}) \\
            \implies & u - u_h = \sum_{k=1}^{d} \sum_{\substack{ \{ j_1, \ldots, j_k \} \\ \subset \{ 1, \ldots, d \} }} h_{j_1}^4 \ldots h_{j_k}^4 \bar{\gamma}^{(d,d)}_{j_1, \ldots, j_k} ( h_{j_1}, \ldots, h_{j_k}) + \sum_{j=1}^{d} h_{j}^2 \beta_{j}^{(d,d)} (h_{1}, \ldots, h_{j-1}, h_{j+1}, \ldots, h_{d}),
        \end{align*}
    where we multiply by \(A_h^{-1}\) and define 
\(
\bar{\gamma}^{(d,d)}_{1,\ldots,d}(\cdot) := A_h^{-1}\gamma^{(d,d)}_{1,\ldots,d}(\cdot),
\)
using the fact that, by Assumption~\ref{newasumption}, the operator \(A_h\) has an inverse that is bounded independently of $(h_1, \ldots, h_d)$.

    This proves (\ref{HOexpansion}) in Theorem \ref{mainresult}.
    \end{proof}

It remains to show Claim \ref{claim} by induction in $1 \leq m \leq d$ and $0 \leq n \leq m$.
The overall strategy of the proof, used both in the base case and the induction step, and motivated by the two-dimensional argument of Section~4, can be summarized as follows.

\begin{enumerate}
    \item Identify the terms on the right-hand side of (\ref{equality?}) that do not yet conform to the desired expansion structure.
    \item Construct an appropriate semi-discrete problem whose truncation error replicates the identified term.
    \item Subtract the solution of the semi–discrete problem from the left–hand side of (\ref{equality?}), thereby cancelling the identified term at the cost of introducing, on the right–hand side, the truncation error of the semi–discrete problem.
    \item Show that the truncation error has the required form, i.e., it consists only of terms compatible with the target expansion by repeatedly applying Taylor expansions and using the fact that the truncation errors are smooth, even functions of the mesh sizes.
\end{enumerate}
This systematic approach is applied throughout the proof to iteratively reduce the right-hand side to the desired form, thereby establishing the inductive claim.

We introduce the following family of semi-discrete operators that split the discretization by dimension. 

\begin{defn}
    For any subset $\{i_1, \ldots, i_m\} \subset \{1, \ldots, d\}$ define the semi discrete operator $A_h^{(i_1, \ldots, i_m)}$, that discretizes the linear operator $A$ in the directions $i_1, \ldots, i_m$ and leaves the remaining directions continuous. 
    These give the systems of equations 
    \[
    A_h^{(i_1, \ldots, i_m)} u^{(i_1, \ldots, i_m)} = f_h^{(i_1, \ldots, i_m)} 
    \]
    with boundary conditions, on the partial grid $I_h^{(i_1, \ldots, i_m)} \coloneqq \left\{ x \in I^d : x_{i_k} = j h_{i_k}, 0 \leq j \leq h_{i_k}^{-1}, 1 \leq k \leq m\right\}$. 
\end{defn}

We make the following assumptions. 
For $m=0$, we define for compactness $A_h^{(i_1, \ldots, i_m)} = A$ and $\{i_1, \ldots, i_m\} = \emptyset$.

\begin{assump}\label{newasumption}

For any right-hand side \(g(\cdot ;h_{i_{1}},\ldots,h_{i_{m}}):
I_{h}^{(i_{1},\ldots,i_{m})} \mapsto \mathbb{R}\) that is smooth in the spatial variables with all derivatives vanishing on the boundary, and smooth and even with respect to the mesh sizes, we assume that the 
semi–discrete problem
\[
A_{h}^{(i_{1},\ldots,i_{m})} w(\cdot;h_{i_{1}},\ldots,h_{i_{m}}) 
    = g(\cdot;h_{i_{1}},\ldots,h_{i_{m}})
\]
admits a solution \(w\) on 
\(I_{h}^{(i_{1},\ldots,i_{m})}\) with the following properties:
\begin{enumerate}[label=(\arabic*)]
    \item \(w\) is smooth in all spatial variables.
    \item \(w\) satisfies 
homogeneous boundary conditions on
    \(\partial I_{h}^{(i_{1},\ldots,i_{m})}
    = I_{h}^{(i_{1},\ldots,i_{m})} \cap \partial I^{d}\):
    \[
        \partial^{\alpha} w = 0
        \qquad \text{on }\partial I_{h}^{(i_{1},\ldots,i_{m})}
        \quad\text{for all multi-indices }\alpha.
    \]
    That is, the solution and all of its mixed derivatives vanish whenever any continuous 
coordinate takes the value \(0\) or \(1\). 
    \item \(w\) is smooth and even in each meshsize component 
    \(h_{i_k}\). 
    
\end{enumerate}
\end{assump}

Then, given any regular enough function $w$, considering the error $[A_h^{(i_1, \ldots, i_m)} - A_h] w$ will allow us to isolate components of the discretisation error of operator $A_h$ only in the non-discretized directions. 
Hence finding appropriate expansions of such errors in terms of powers of the mesh sizes will be a main focus in our analysis.

\begin{assump}\label{assumption}
We assume that any function 
$w(\cdot;h_{i_1},\ldots,h_{i_m})$ satisfying the smoothness and 
boundary conditions stated in Assumption~\ref{newasumption} 
admits an error representation of the form
\[
\left[ \mathbf{A}_h^{(i_1, \ldots, i_m)} - \mathbf{A}_h \right] w(\cdot; h_{i_1}, \ldots, h_{i_m})
= \sum_{k=1}^{d-m} \sum_{\substack{ \{j_1, \ldots, j_k\} \\ \subset \{1, \ldots, d \} - \{ i_1, \ldots, i_m \} }} 
h_{j_1}^2 \cdots h_{j_k}^2 \,
\hat{\sigma}_{j_1, \ldots, j_k}^{i_1, \ldots, i_m}(\cdot ; h_{j_1}, \ldots, h_{j_k}, h_{i_1}, \ldots, h_{i_m}),
\]
where each coefficient
\[
\hat{\sigma}_{j_1,\dots,j_k}^{i_1, \ldots, i_m}:I^{d-m}\times[0,1]^{k+m}\;\longrightarrow\;\mathbb{R}
\]
satisfies the following properties:

\begin{enumerate}[label=(\arabic*)]
  \item For every fixed mesh‐size tuple $(h_{j_1},\ldots,h_{j_k}, h_{i_1}, \ldots, h_{i_m})$, 
  \\the function
  \(
    y\;\mapsto\;\hat{\sigma}_{j_1,\dots,j_m}^{i_1, \ldots, i_m}\bigl(y;\,h_{j_1},\dots,h_{j_k}, h_{i_1}, \dots, h_{i_m}\bigr)
  \)
  is smooth in $I^{d-m}$.

    \item For every fixed mesh–size tuple,  
  $\hat{\sigma}_{j_1,\dots,j_k}^{i_1,\ldots,i_m}$ vanishes on the boundary 
  $\partial I^{d-m}$, and all of its mixed derivatives in the continuous variables
  vanish on $\partial I^{d-m}$.

  \item  For every fixed $y\in I^{d-m}$, the map
  \(
    (h_{j_1},\dots,h_{j_k}, h_{i_1}, \dots, h_{i_m})
    \;\mapsto\;
    \hat\sigma_{j_1,\dots,j_m}^{i_1, \ldots, i_m}\bigl(y;\,h_{j_1},\dots,h_{j_k}, h_{i_1}, \dots, h_{i_m}\bigr)
  \)
  is a smooth even function on $[0,1]^{k+m}$. 
\end{enumerate}
\end{assump}

Throughout the remainder of this section, all statements and proofs 
are carried out under Assumptions~\ref{newasumption} and~\ref{assumption}. 

In Section \ref{sec:appl}, we verify these assumptions in detail for the Poisson equation under appropriate regularity hypotheses. For completeness, we also include below a brief discussion explaining why such 
assumptions are reasonable in more general settings.
\\For arbitrary boundary data, the zero trace hypothesis in 
Assumption~\ref{newasumption} causes no loss of generality.  
Although sparse grid analysis is usually carried out under zero trace boundary conditions, this is merely a convenient normalisation. In practice, high-dimensional PDEs are posed on $\mathbb{R}^d$ and their solutions decay smoothly at infinity. After localisation to a bounded box, the 
imposed boundary values differ from a smooth zero trace function only by a small perturbation, which has a controlled effect on the sparse grid error. Thus, the zero trace assumption is not restrictive. For a detailed discussion, see \cite{reisinger2013analysis}. 
\\For typical symmetric (central) discretizations, the regularity and symmetry properties in Assumption \ref{assumption} are natural. They can be justified by carrying out a sufficiently high order Taylor expansion of the exact solution in both the spatial variables and the mesh sizes, together with the use of second order consistent finite difference stencils. A rigorous verification follows from tracking higher order remainder terms and exploiting the symmetry of the difference quotients.

One can show that any such even smooth function $\sigma_{j_1, \ldots, j_m}$ admits a Taylor expansion in the squares of its arguments:

\begin{lem}\label{evenexp}
Let $\sigma:[-1,1]^d \rightarrow \mathbb{R}$ be a smooth even function of $h_1, \ldots, h_d$, that is $\sigma(h_1, \ldots, h_d) = \sigma\left((-1)^{i_1}h_1, \ldots, (-1)^{i_d} h_d\right)$, for all $(i_1, \ldots, i_d) \in \{0,1\}^d$. 

Then there exist smooth even functions $\delta_{j_1, \ldots, j_m}:[-1,1]^m \rightarrow \mathbb{R}$ such that 

\begin{align*}
\sigma( h_{1}, \ldots, h_{d}) = \sum_{m=0}^d \sum_{\substack{ \{j_1, \ldots, j_m\} \\ \subset \{1, \ldots, d\} }} h_{j_1}^2 \ldots h_{j_m}^2 \delta_{j_1, \ldots, j_m} (h_{j_1}, \ldots, h_{j_m}),
\end{align*}
where by abuse of notation, for $m=0$, $\delta_{j_1, \ldots, j_m} \eqqcolon \delta_0$  denotes a constant. 
\end{lem}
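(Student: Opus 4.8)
The plan is to reduce the $d$-variable statement to a one-dimensional division lemma applied separately in each coordinate, and then to expand a product of the resulting one-variable splittings.

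\emph{Step 1 (one-variable division).} First I would establish the elementary fact that if $f:[-1,1]\to\mathbb{R}$ is smooth and even then $f(h)=f(0)+h^2 g(h)$ for a smooth even $g$. Taylor's theorem with integral remainder gives $f(h)=f(0)+f'(0)h+h^2\int_0^1(1-t)f''(th)\,dt$; evenness forces $f'(0)=0$, so $g(h):=\int_0^1(1-t)f''(th)\,dt$ works, with smoothness of $g$ coming from differentiation under the integral sign and $g(-h)=g(h)$ from evenness of $f''$. The same computation applies verbatim when $f$ depends smoothly on further parameters (the variables $h_k$, $k\neq j$), yielding joint smoothness of $g$ and preservation of evenness in those parameters.

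\emph{Step 2 (commuting split operators).} For each coordinate $j$ I would introduce operators $P_j$ and $R_j$ on smooth even functions, where $P_j\sigma:=\sigma|_{h_j=0}$ (extended to be constant in $h_j$) and $R_j\sigma:=g$ is the quotient from Step 1, so that $\sigma=P_j\sigma+h_j^2R_j\sigma$. Since $P_j$ and $R_j$ alter only the dependence on $h_j$ (through evaluation, respectively an integral, in that single variable), they preserve smoothness and evenness in all variables, and operators attached to distinct coordinates commute with each other and with multiplication by $h_k^2$ for $k\neq j$.

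\emph{Step 3 (expansion).} Writing $\mathrm{Id}=P_j+h_j^2R_j$ for each $j$ and composing over $j=1,\dots,d$, I would expand
\[
\sigma=\prod_{j=1}^d\bigl(P_j+h_j^2R_j\bigr)\sigma=\sum_{S\subset\{1,\dots,d\}}\Bigl(\prod_{j\in S}h_j^2\Bigr)\Bigl(\prod_{j\in S}R_j\Bigr)\Bigl(\prod_{j\notin S}P_j\Bigr)\sigma,
\]
using the commutativity from Step 2 to pull the factors $h_j^2$ ($j\in S$) to the front. Setting $\delta_{j_1,\dots,j_m}:=\bigl(\prod_{j\in S}R_j\bigr)\bigl(\prod_{j\notin S}P_j\bigr)\sigma$ for $S=\{j_1,\dots,j_m\}$, the operators $P_j$ with $j\notin S$ evaluate those variables at $0$, so $\delta_{j_1,\dots,j_m}$ depends only on $(h_{j_1},\dots,h_{j_m})$ and, by Step 2, is smooth and even in them; the $m=0$ term is the constant $\sigma(0,\dots,0)$. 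This is precisely the claimed decomposition. The only genuinely analytic ingredient is the one-variable division lemma of Step 1 (a Whitney-type statement that even smooth functions are smooth functions of $h^2$); I expect this to be the main point, while the rest is routine bookkeeping of commuting single-variable splittings that act on disjoint variables.
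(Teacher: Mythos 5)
Your proof is correct and follows essentially the same route as the paper's: the key ingredient in both is the univariate even Taylor division $f(h)=f(0)+h^2g(h)$ with $g$ smooth and even, applied one coordinate at a time. The paper organizes the iteration as an induction on $d$ with a symmetrized Taylor expansion in the last variable, whereas you package the same iteration as the expansion of the commuting operator product $\prod_{j=1}^d\bigl(P_j+h_j^2R_j\bigr)$; the substance is identical.
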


\begin{proof}
We perform induction in $d$, using at each step a standard even Taylor expansion in the last variable. In the base case $d=1$, one writes
\[
\sigma(h)=\tfrac12\bigl[\sigma(h)+\sigma(-h)\bigr]
=\sigma(0)+h^2\,\delta(h),
\]
where $\delta$ is a smooth even function.
In the inductive step, a symmetrized Taylor expansion in \(h_d\) and the induction hypothesis in dimension \(d-1\) yield the full decomposition at once. 
\end{proof}

The assumptions stated above, combined with the expansion result for smooth even functions given in the previous lemma, allow us to establish the following technical result. This lemma will play a key role in the proof of the main theorem, where it will be applied repeatedly to handle the structure of various truncation error terms. For clarity and ease of reference, we state it here separately, and provide a detailed proof in the Appendix \ref{Appendix}.

\begin{lem}\label{technicallemma}
Let $\{ i_1, \ldots, i_n \} \subseteq \{k_1, \ldots, k_p\} \subseteq \{1, \ldots, d\}$ and $\{j_1, \ldots, j_m\} \cap \{k_1, \ldots, k_p\} = \emptyset$. 

Let 
\(
\phi(\cdot; h_{i_1}, \ldots, h_{i_n})
\)
be any sufficiently regular function, smooth in its spatial variables and 
smooth and even in each mesh-size component.  
Suppose that the semi-discrete problem
\[
A_h^{(k_1, \ldots, k_p)} \psi(\cdot; h_{k_1}, \ldots, h_{k_p})
    = \phi(\cdot; h_{k_1}, \ldots, h_{k_p})
\]
admits a solution $\psi$ satisfying the regularity requirements of 
Assumptions ~\ref{newasumption} and ~\ref{assumption}.  
Then the corresponding solution 
\(
\psi(\cdot; h_{k_1}, \ldots, h_{k_p}) \in C^\infty(I^d)
\)
inherits the same smoothness and evenness properties in the mesh sizes, and 
the truncation error satisfies:
\begin{align}\label{RHS}
    & h_{j_1}^2 \cdots h_{j_m}^2 \, h_{i_1}^4 \cdots h_{i_n}^4 \, \big[ A_h^{(k_1, \ldots, k_p)} - A_h \big] \psi(h_{k_1}, \ldots, h_{k_p})  \\
    & = \sum_{q = m+n+1}^d \sum_{ \substack{ \{l_1, \ldots, l_q\} \\ \subset \{1, \ldots, d\} }} 
    h_{l_1}^4 \cdots h_{l_q}^4 \, \gamma_{l_1, \ldots, l_q}(\cdot; h_{l_1}, \ldots, h_{l_q}) \notag \\
    & \quad + \sum_{q = m+n+1}^d \sum_{ \substack{ \{l\} \sqcup \{l_2, \ldots, l_q\} \\ \subset \{1, \ldots, d\} }} 
    h_l^2 h_{l_2}^2 \cdots h_{l_q}^2 \, \beta_{l l_2 \ldots l_q}(h_{l_2}, \ldots, h_{l_q}) \notag \\
    & \quad + \sum_{q = n+1}^{m+n} \sum_{ \substack{ \{s_1, \ldots, s_q\} \sqcup \{r_1, \ldots, r_{m+n-q}\} \\ \subset \{1, \ldots, d\} }} 
    h_{r_1}^2 \cdots h_{r_{m+n-q}}^2 \, h_{s_1}^4 \cdots h_{s_q}^4 \, 
    \sigma_{s_1, \ldots, s_q}^{r_1, \ldots, r_{m+n-q}}(h_{s_1}, \ldots, h_{s_q}).\notag
\end{align}
Here, $\gamma_{l_1, \ldots, l_q}$, $\beta_{l l_2 \ldots l_q}$, and $\sigma_{s_1, \ldots, s_q}^{r_1, \ldots, r_{m+n-q}}$ are smooth functions that are also smooth and even in the mesh-size variables.
\end{lem}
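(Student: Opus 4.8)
The plan is to reduce the truncation error to a finite sum of monomials in the mesh sizes, each carrying a smooth even coefficient, and then to sort these monomials into the three groups on the right-hand side of \eqref{RHS} by tracking the total power of each coordinate direction. First I would apply Assumption~\ref{assumption} to the semi-discrete solution $\psi$, which expands $[A_h^{(k_1,\ldots,k_p)}-A_h]\psi$ as a sum over nonempty subsets $\{a_1,\ldots,a_t\}$ of the complement of $\{k_1,\ldots,k_p\}$ of terms $h_{a_1}^2\cdots h_{a_t}^2\,\hat\sigma^{k_1,\ldots,k_p}_{a_1,\ldots,a_t}$, where each $\hat\sigma$ is smooth in space and smooth and even in all the mesh sizes $h_{a_1},\ldots,h_{a_t},h_{k_1},\ldots,h_{k_p}$. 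Multiplying by the prefactor $h_{j_1}^2\cdots h_{j_m}^2 h_{i_1}^4\cdots h_{i_n}^4$ and then invoking Lemma~\ref{evenexp} on each $\hat\sigma$ to expose its mesh-size dependence as explicit even powers, the whole expression becomes a finite sum of terms of the form $\bigl(\prod_{\ell}h_\ell^{2p_\ell}\bigr)\,\delta(\cdot)$, with $\delta$ smooth, even, boundary-vanishing, and depending only on a prescribed subset of the mesh sizes.

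The core of the argument is the combinatorial sorting step. For a generic monomial I would compute, for each direction $\ell$, its total power $2p_\ell$ and record whether the accompanying coefficient depends on $h_\ell$. Using the nesting $\{i_1,\ldots,i_n\}\subseteq\{k_1,\ldots,k_p\}$ and the disjointness $\{j_1,\ldots,j_m\}\cap\{k_1,\ldots,k_p\}=\emptyset$, every monomial carries at least the $m+n$ directions in $\{i_1,\ldots,i_n\}\cup\{j_1,\ldots,j_m\}$, while any further direction can only come from the expansion set $\{a_1,\ldots,a_t\}$ or from a $k$-direction raised to second order by Lemma~\ref{evenexp}. The decisive dichotomy is then: if no direction beyond $\{i_1,\ldots,i_n\}\cup\{j_1,\ldots,j_m\}$ appears, the monomial carries \emph{exactly} $m+n$ directions, of which between $n+1$ and $m+n$ sit at fourth order (the directions $i_1,\ldots,i_n$ together with the nonempty set of $j$-directions reached by the expansion) and the remaining $j$-directions sit at second order with coefficients independent of them; this is precisely a $\sigma$-term of \eqref{RHS} with $q\in[n+1,m+n]$. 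If instead at least one extra direction appears, the monomial carries $\ge m+n+1$ directions and is routed to the $\gamma$-sum or the $\beta$-sum. Excess even powers ($\ge 6$) are absorbed into the coefficients, which stay bounded since $h_\ell\le 1$ and smooth and even because products and sums of smooth even functions are smooth and even.

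The main obstacle is to show that every monomial with $\ge m+n+1$ directions can indeed be written either with all directions at fourth order (a $\gamma$-term) or with a \emph{distinguished} second-order direction whose coefficient is independent of it (a $\beta$-term). The subtle case is a direction sitting at second order whose coefficient genuinely depends on it; by the regime analysis this can only be a $k$-direction outside $\{i_1,\ldots,i_n\}$ that was raised to second order by the even expansion. I would resolve it with a further even Taylor split from Lemma~\ref{evenexp}: writing $h_{k'}^2\,\delta(\ldots,h_{k'})=h_{k'}^2\,\delta(\ldots,0)+h_{k'}^4\,\rho(\ldots,h_{k'})$ peels off a genuinely clean second-order factor, which supplies the distinguished direction for the $\beta$-sum, and pushes the remainder to fourth order; iterating raises the power at each step, so the procedure terminates and the leftover all-fourth-order piece falls into the $\gamma$-sum. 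One then checks that the index-count ranges are exactly $[m+n+1,d]$ for the $\gamma$- and $\beta$-sums and $[n+1,m+n]$ for the $\sigma$-sum, the lower bounds being forced respectively by the disjointness of the prefactor indices and by the strict positivity $t\ge 1$ in the expansion of Assumption~\ref{assumption}.

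Finally, I would record that all smoothness, boundary-vanishing and evenness properties survive the construction: they hold for $\psi$ by Assumptions~\ref{newasumption} and~\ref{assumption}, for the $\hat\sigma$ by Assumption~\ref{assumption}, for the $\delta$ outputs of Lemma~\ref{evenexp}, and are preserved under the finite products, sums and even Taylor splits used above. This yields the stated form of the coefficients $\gamma_{l_1,\ldots,l_q}$, $\beta_{l l_2\ldots l_q}$ and $\sigma^{r_1,\ldots,r_{m+n-q}}_{s_1,\ldots,s_q}$, completing the reduction.
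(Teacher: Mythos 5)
Your proposal is correct and follows essentially the same route as the paper's Appendix~\ref{Appendix} proof: expand via Assumption~\ref{assumption}, re-expand the coefficients in even powers via Lemma~\ref{evenexp}, and then sort the resulting monomials into the three groups by counting distinct directions, with a further even Taylor split to manufacture the distinguished clean second-order direction for the $\beta$-terms. The only slip is your claim that the ``subtle'' second-order directions with self-dependent coefficients can only be $k$-directions outside $\{i_1,\ldots,i_n\}$ --- they can equally be directions from the Assumption~\ref{assumption} expansion set lying outside $\{j_1,\ldots,j_m\}$ --- but your peeling mechanism handles both cases identically, so the argument is unaffected.
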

\begin{proof}
The proof proceeds by expanding the truncation error using Assumption~\ref{assumption} and Lemma~\ref{evenexp} and organising all resulting terms into one of the three structures appearing on the right-hand side of (\ref{RHS}).
A detailed justification of this decomposition is provided in Appendix~\ref{Appendix}.
\end{proof}

We now state and prove a second technical lemma needed for the main proof.

\begin{lem}\label{lem:base}
For smooth $u$, there exist smooth functions $\sigma_j^{(0,1)}$, $\gamma_{j_1, \ldots, j_k}^{(0,1)}$, $\beta_{j,j_2, \ldots , j_k}^{(0,1)}$ and constants ${\sigma^j}^{(0,1)}$ such that
    \begin{align*}
     & A_h u - f_h = \sum_{j=1}^d h_{j}^2 {\sigma^j}^{(0,1)} + \sum_{j=1}^d h_j^4 \sigma_j^{(0,1)}(h_j)\\
     & \qquad +   \sum_{k=2}^d \sum_{\substack{ \{j_1, \dots, j_k\} \\ \subset \{1, \dots, d\} }} h_{j_1}^4 \dots h_{j_k}^4 \gamma_{j_1, \dots, j_k}^{(0,1)}(h_{j_1}, \dots, h_{j_k}) + \sum_{k=2}^d \sum_{\substack{ \{j\}\sqcup \{j_2, \dots, j_k\} }} \beta_{j,j_2,\dots,j_k}^{(0,1)}(h_{j_2}, \dots, h_{j_k}). 
    \end{align*}
\end{lem}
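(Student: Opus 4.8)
The plan is to read off Lemma~\ref{lem:base} as a regrouping of the raw truncation-error expansion furnished by Assumption~\ref{assumption} in its degenerate case $m=0$. With the convention $A_h^{(\emptyset)}=A$, the exact solution $u$ itself satisfies the hypotheses of Assumption~\ref{newasumption}: it is $C^\infty$ on $I^d$, has vanishing trace and derivatives on $\partial I^d$, and, being independent of the mesh sizes, is trivially smooth and even in each $h_j$. I would therefore apply Assumption~\ref{assumption} to $w=u$ to obtain
\[
[A - A_h]\,u = \sum_{k=1}^d \sum_{\substack{\{j_1,\ldots,j_k\} \\ \subset \{1,\ldots,d\}}} h_{j_1}^2\cdots h_{j_k}^2\, \hat{\sigma}_{j_1,\ldots,j_k}(\cdot; h_{j_1},\ldots,h_{j_k}),
\]
where each $\hat{\sigma}_{j_1,\ldots,j_k}$ is smooth in space, vanishes on the boundary, and is smooth and even in its mesh-size arguments. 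Since $Au=f$ and $f_h$ is the restriction of $f$ to $I_h$, evaluating on the grid turns the left-hand side into $f_h - A_h u$, so that $A_h u - f_h$ is precisely the negative of this sum: a finite combination of products $h_{j_1}^2\cdots h_{j_k}^2$ with smooth, even coefficients.

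It then remains to regroup this sum into the four families in the statement, using repeatedly the one-dimensional even Taylor split of Lemma~\ref{evenexp}, namely $\rho(h)=\rho(0)+h^2\tilde\rho(h)$ with $\tilde\rho$ again smooth and even. For the $k=1$ terms I would write $\hat\sigma_j(h_j)=\hat\sigma_j(0)+h_j^2\tilde\sigma_j(h_j)$: the first piece yields the mesh-independent coefficients ${\sigma^j}^{(0,1)}:=\hat\sigma_j(0)$ in $\sum_j h_j^2{\sigma^j}^{(0,1)}$, and the second the term $\sum_j h_j^4\sigma_j^{(0,1)}(h_j)$.

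For each $k\ge 2$ term I would apply a telescoping split over the variables $h_{j_1},\ldots,h_{j_k}$ in turn. At step $r$ I write the current coefficient as its value at $h_{j_r}=0$ plus $h_{j_r}^2$ times a smooth even remainder; the frozen piece has no $h_{j_r}$-dependence, and after rewriting the already-promoted factors $h_{j_1}^4\cdots h_{j_{r-1}}^4$ as $h_{j_1}^2\cdots h_{j_{r-1}}^2\cdot\bigl(h_{j_1}^2\cdots h_{j_{r-1}}^2\bigr)$, it produces a $\beta$-term with $j_r$ as the distinguished index: its prefactor $h_{j_1}^2\cdots h_{j_k}^2$ is purely quadratic and its coefficient is independent of $h_{j_r}$, matching $\beta_{j_r,\ldots}^{(0,1)}$. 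The remainder carries $h_{j_r}$ promoted to the fourth power, and after all $k$ variables are processed the final remainder has prefactor $h_{j_1}^4\cdots h_{j_k}^4$ and supplies $\gamma_{j_1,\ldots,j_k}^{(0,1)}$. Smoothness, boundary vanishing, and evenness in the mesh sizes are inherited at every split from Lemma~\ref{evenexp}.

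I do not expect a genuine obstacle: the argument is essentially combinatorial bookkeeping. The only point demanding care is the $k\ge 2$ telescoping, where one must verify that each extracted $\beta$-term truly has the prescribed form — all mesh factors appearing quadratically in the prefactor, and the coefficient independent of the single distinguished direction — which is exactly what the rewriting $h^4=h^2\cdot h^2$ of the previously processed directions secures.
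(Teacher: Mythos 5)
Your proposal is correct and follows essentially the same route as the paper: both start from the order-two truncation-error expansion of Assumption \ref{assumption} (your explicit invocation of the $m=0$ case with $Au=f$ just makes precise what the paper states directly) and then split each coefficient by the even Taylor expansion into a pure quartic remainder plus mixed terms with one direction left quadratic. Your variable-by-variable telescoping is simply the unrolled form of the paper's single application of the multivariate Lemma \ref{evenexp}, so there is no substantive difference.
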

\begin{proof}
    Consider the truncation error of the second order finite difference approximation as per Assumption \ref{assumption},
    \[
    A_h u - f_h = \sum_{m=1}^d \sum_{\substack{ \{j_1, \ldots, j_m\} \\ \subset \{1, \ldots, d\} }} h_{j_1}^2 \ldots h_{j_m}^2 \sigma_{j_1, \ldots, j_m}(h_{j_1}, \ldots, h_{j_m}).
    \]

Moreover, since each coefficient \(\sigma_{j_1,\dots,j_m}(h_{j_1},\dots,h_{j_m})\) is a smooth, even function of the mesh sizes, we may apply a Taylor expansion in the variables \(h_j\). For \(m \geq 2\), each product
\begin{align*}
        & h_{j_1}^2 \ldots h_{j_m}^2 \sigma_{j_1, \ldots, j_m}(h_{j_1}, \ldots, h_{j_m}) \\
        = &  h_{j_1}^2 \ldots h_{j_m}^2 \left[ \sigma_0^{^{(j_1, \dots, j_m)}} + \sum_{k=1}^{m-1} \sum_{\substack{\{i_1, \ldots, i_k\} \\ \subset \{j_1, \ldots, j_m\} }} h_{i_1}^2 \ldots h_{i_k}^2 \sigma_{i_1, \ldots, i_k}^{(j_1, \dots, j_m)}(h_{i_1}, \ldots, h_{i_k}) + h_{j_1}^2 \ldots h_{j_m}^2 \sigma_{j_1, \dots, j_m}^{(j_1, \ldots, j_m)}(h_{j_1}, \ldots, h_{j_m}) \right] \\
        = & h_{j_1}^2 h_{j_2}^2 \ldots h_{j_m}^2 \sigma_0^{(j_1,\ldots, j_m)} + \sum_{k=1}^{m-1} \sum_{\substack{ \{i_1, \ldots, i_k\} \\ \subsetneq \{j_1, \ldots, j_m \} }} h_{j_1}^2 \ldots h_{j_m}^2 \hat{\sigma}_{i_1, \ldots, i_k}^{(j_1, \ldots, j_m)}(h_{i_1}, \ldots, h_{i_k}) + h_{j_1}^4 \ldots h_{j_m}^4 \sigma_{j_1, \ldots, j_m}^{(j_1, \ldots, j_m)}(h_{j_1}, \ldots, h_{j_m}) 
    \end{align*}
    with $\hat{\sigma}_{i_1, \ldots, i_k}^{(j_1, \ldots, j_m)}(h_{i_1, \ldots, i_k}) \coloneqq h_{i_1}^2 \ldots h_{i_k}^2 \sigma_{i_1, \ldots, i_k}^{(j_1, \dots, j_m)}(h_{i_1}, \ldots, h_{i_k})$, splits into a pure fourth‐order term for the form 
\[
h_{j_1}^4\cdots h_{j_m}^4\;\gamma_{j_1,\dots,j_m}(h_{j_1},\dots,h_{j_m})
\]
and mixed second‐order terms of the form
\[
h_j^2\,h_{j_2}^2\cdots h_{j_m}^2\;\beta_{j,j_2,\dots,j_m}(h_{j_2},\dots,h_{j_m}).
\]
For $m=1$, 
\[
h_j^2 \sigma_j(h_j) = h_j^2 \sigma_0 +  h_j^4 \sigma_j(h_j).
\]
Summing over all index‐sets then expresses 
\(\,A_h u - f_h\)\ 
precisely as a linear combination of fourth‐order and mixed second‐order contributions as in the statement.
\end{proof}


We are now in a position to prove Claim \ref{claim} and therefore complete the proof of Theorem \ref{mainresult}.

    \begin{proof}[Proof of Claim \ref{claim}]
    We organise the proof as a two–parameter induction.  
For each \(0 \le n \le m\) and \( 1 \le m \le d\), let \(P(n,m)\) denote the desired statement.

We begin by verifying the base case \(P(0,1)\).  
For fixed \(m\), we then perform induction in \(n\), showing that 
\(P(n,m) \Rightarrow P(n+1,m)\) for all \(0 \le n < m\); thus, once \(P(0,m)\) is known, this yields \(P(1,m),\ldots,P(m,m)\).  
Finally, we advance from level \(m\) to \(m+1\) by proving that \(P(m,m) \Rightarrow P(0,m+1)\), which initiates the next sequence.  
These steps together yield \(P(n,m)\) for all \(0 \le n \le m \) and \( 1 \le m \le d\), completing the proof.

    The base case $P(0,1)$ is given by Lemma \ref{lem:base}.

    For the induction in \(n\), assume \(P(n,m)\) for some \(n<m\), i.e.\ that equality~\eqref{equality?} holds. To show $P(n+1,m)$ holds, we need to cancel the terms 
    \[
    \sum_{\substack{ \{i_1, \dots, i_n\} \sqcup \{k_1, \dots, k_{m-n}\} \\ \subset \{1, \dots, d\} }} h_{k_1}^2 \dots h_{k_{m-n}}^2 h_{i_1}^4 \dots h_{i_n}^4 {\sigma_{i_1, \dots, i_n}^{k_1, \dots, k_{m-n}}}^{(m,n)}(h_{i_1}, \dots, h_{i_n}). 
    \]
    For that purpose, introduce the following sequence of semi-discrete problems: 
    \[
    A_h^{(i_1, \dots, i_n)} \hat{\sigma}_{i_1,\dots,i_n}^{k_1, \dots, k_{m-n}}(h_{i_1}, \dots, h_{i_n}) = {\sigma_{i_1,\dots,i_n}^{k_1, \dots, k_{m-n}}}^{(n,m)}(h_{i_1}, \dots, h_{i_n}). 
    \]
    By Lemma \ref{technicallemma}, the truncation errors of such a sequence of semi-discrete problems satisfy: 
    \begin{align}\label{thisthis}
       & h_{k_1}^2 \dots h_{k_{m-n}}^2 h_{i_1}^4 \dots h_{i_n}^4 \left[ A_h^{(i_1,\dots,i_n)} - A_h\right] \hat{\sigma}_{i_1,\dots,i_n}^{k_1,\dots,k_{m-n}}(h_{i_1},\dots,h_{i_n}) \notag \\
       & = \sum_{k = m+1}^d \sum_{\substack{ \{l_1,\dots,l_k\} \\ \subset \{1,\dots, d\} }} h_{l_1}^4 \dots h_{l_k}^4 \gamma_{l_1,\dots, l_k}^{k_1,\dots,k_{m-n},i_1,\dots,i_n}(h_{l_1},\dots,h_{l_k}) \notag \\
       & \qquad + \sum_{k = m+1}^d \sum_{ \substack{ \{l\}\sqcup\{l_2,\dots,l_k\} \\ \subset \{1,\dots, d\} } } h_l^2 h_{l_2}^2 \dots h_{l_k}^2 \beta_{l,l_2,\dots,l_k}^{k_1,\dots,k_{m-n},i_1,\dots,i_n}(h_{l_2},\dots,h_{l_k}) \notag \\
       & \qquad + \sum_{k = n+1}^{m} \sum_{ \substack{ \{s_1,\dots,s_k\} \sqcup \{r_1,\dots,r_{m-k}\} \\ \subset \{1,\dots, d\} } } h_{r_1}^2 \dots h_{r_{m-k}}^2 h_{s_1}^4 \dots h_{s_k}^4 \sigma_{s_1,\dots,s_k}^{r_1,\dots,r_{m-k},k_1,\dots,k_{m-n},i_1,\dots,i_n}(h_{s_1},\dots,h_{s_k}). 
    \end{align}
    
    The induction step in $n$ now follows at once by subtracting such semi-discrete problems, applying the induction hypothesis to go from the second to the third line and using (\ref{thisthis}) to go from the fourth to the fifth equality: 
    \begin{align*}
        & A_h \left[ u - \sum_{k=1}^{m-1} \sum_{ \substack{ \{j_1, \ldots, j_k\} \\ \subset \{1, \ldots, d\} } } h_{j_1}^4 \ldots h_{j_k}^4 \gamma_{j_1, \ldots, j_k}^{(n+1,m)}(h_{j_1}, \ldots, h_{j_k}) - \sum_{j=1}^d h_j^2 \beta_j^{(n+1,m)}(h_1, \ldots, h_{j-1}, h_{j+1}, \ldots, h_d)\right] - f_h \\
        \coloneqq & A_h \left[ u - \sum_{k=1}^{m-1} \sum_{ \substack{ \{j_1, \ldots, j_k\} \\ \subset \{1, \ldots, d\} } } h_{j_1}^4 \ldots h_{j_k}^4 \gamma_{j_1, \ldots, j_k}^{(n,m)}(h_{j_1}, \ldots, h_{j_k}) - \sum_{j=1}^d h_j^2 \beta_j^{(n,m)}(h_1, \ldots, h_{j-1}, h_{j+1}, \ldots, h_d)\right] - f_h\\
        & \qquad - A_h \left[ \sum_{ \substack{ \{i_1,\dots,i_n\}\sqcup \{k_1,\dots,k_{m-n}\} \\ \subset \{1,\dots,d\} }} h_{k_1}^2 \dots h_{k_{m-n}}^2 h_{i_1}^4 \dots h_{i_n}^4 \hat{\sigma}_{i_1,\dots,i_n}^{k_1,\dots,k_{m-n}}(h_{i_1},\dots,h_{i_n})\right] \\ 
        = & \sum_{k=m+1}^{d} \sum_{\substack{ \{ j_1, \ldots, j_k \} \\ \subset \{ 1, \ldots, d \} }} h_{j_1}^4 \ldots h_{j_k}^4 \gamma^{(n,m)}_{j_1, \ldots, j_k} ( h_{j_1}, \ldots, h_{j_k}) + \sum_{k=m+1}^{d} \sum_{\substack{\{j\} \sqcup \{ j_2, \ldots, j_k\} \\ \subset \{1, \ldots, d\} } } h_j^2 h_{j_2}^2 \ldots h_{j_k}^2 \beta_{j, j_2, \ldots, j_k}^{(n,m)}( h_{j_2}, \ldots, h_{j_k}) \notag \\
            & \quad + \sum_{l=n+1}^m \sum_{\substack{ \{s_1, \ldots, s_l\} \sqcup \{r_1, \ldots, r_{m-l}\} \\ \subset \{1, \ldots, d\} } }h_{r_1}^2 \dots h_{r_{m-l}}^2 h_{s_1}^4 \dots h_{s_l}^4 {\sigma_{s_1, \ldots, s_l}^{r_1, \ldots, r_{m-l}}}^{(n,m)}(h_{s_1}, \dots, h_{s_l}) \\
            & \qquad + \sum_{ \substack{ \{i_1,\dots,i_n\} \sqcup \{k_1,\dots,k_{m-n}\} \\ \subset \{1,\dots d\} } } h_{k_1}^2 \dots h_{k_{m-n}}^2 h_{i_1}^4 \dots h_{i_n}^4 \left[ {\sigma_{i_1,\dots,i_n}^{k_1,\dots,k_{m-n}}}^{(n,m)} - A_h \hat{\sigma}_{i_1,\dots,i_n}^{k_1,\dots,k_{m-n}} \right](h_{i_1},\dots,h_{i_n})\\
        = & \sum_{k=m+1}^{d} \sum_{\substack{ \{ j_1, \ldots, j_k \} \\ \subset \{ 1, \ldots, d \} }} h_{j_1}^4 \ldots h_{j_k}^4 \gamma^{(n,m)}_{j_1, \ldots, j_k} ( h_{j_1}, \ldots, h_{j_k}) + \sum_{k=m+1}^{d} \sum_{\substack{\{j\} \sqcup \{ j_2, \ldots, j_k\} \\ \subset \{1, \ldots, d\} } } h_j^2 h_{j_2}^2 \ldots h_{j_k}^2 \beta_{j, j_2, \ldots, j_k}^{(n,m)}( h_{j_2}, \ldots, h_{j_k}) \notag \\
            & \quad + \sum_{l=n+1}^m \sum_{\substack{ \{s_1, \ldots, s_l\} \sqcup \{r_1, \ldots, r_{m-l}\} \\ \subset \{1, \ldots, d\} } }h_{r_1}^2 \dots h_{r_{m-l}}^2 h_{s_1}^4 \dots h_{s_l}^4 {\sigma_{s_1, \ldots, s_l}^{r_1, \ldots, r_{m-l}}}^{(n,m)}(h_{s_1}, \dots, h_{s_l}) \\
            & \qquad + \sum_{ \substack{ \{i_1,\dots,i_n\} \sqcup \{k_1,\dots,k_{m-n}\} \\ \subset \{1,\dots d\} } } h_{k_1}^2 \dots h_{k_{m-n}}^2 h_{i_1}^4 \dots h_{i_n}^4 \left[ A_h^{(i_1,\dots,i_n)}- A_h  \right]\hat{\sigma}_{i_1,\dots,i_n}^{k_1,\dots,k_{m-n}}(h_{i_1},\dots,h_{i_n})\\
        = & \sum_{k=m+1}^{d} \sum_{\substack{ \{ j_1, \ldots, j_k \} \\ \subset \{ 1, \ldots, d \} }} h_{j_1}^4 \ldots h_{j_k}^4 \gamma^{(n+1,m)}_{j_1, \ldots, j_k} ( h_{j_1}, \ldots, h_{j_k}) + \sum_{k=m+1}^{d} \sum_{\substack{\{j\} \sqcup \{ j_2, \ldots, j_k\} \\ \subset \{1, \ldots, d\} } } h_j^2 h_{j_2}^2 \ldots h_{j_k}^2 \beta_{j, j_2, \ldots, j_k}^{(n+1,m)}( h_{j_2}, \ldots, h_{j_k}) \notag \\
            & \quad + \sum_{l=n+1}^m \sum_{\substack{ \{s_1, \ldots, s_l\} \sqcup \{r_1, \ldots, r_{m-l}\} \\ \subset \{1, \ldots, d\} } }h_{r_1}^2 \dots h_{r_{m-l}}^2 h_{s_1}^4 \dots h_{s_l}^4 {\sigma_{s_1, \ldots, s_l}^{r_1, \ldots, r_{m-l}}}^{(n+1,m)}(h_{s_1}, \dots, h_{s_l}), \\
    \end{align*}
    where
\begin{align*}
\gamma^{(n+1,m)}_{j_1,\ldots,j_k}
  &\coloneqq \gamma^{(n,m)}_{j_1,\ldots,j_k}
     + \sum_{\substack{\{i_1,\ldots,i_n\}\,\sqcup\,\{k_1,\ldots,k_{m-n}\}}}
       \gamma^{k_1,\ldots,k_{m-n},i_1,\ldots,i_n}_{j_1,\ldots,j_k}, \\[0.4em]
\beta^{(n+1,m)}_{j,j_2,\ldots,j_k}
  &\coloneqq \beta^{(n,m)}_{j,j_2,\ldots,j_k}
     + \sum_{\substack{\{i_1,\ldots,i_n\}\,\sqcup\,\{k_1,\ldots,k_{m-n}\}}}
       \beta^{k_1,\ldots,k_{m-n},i_1,\ldots,i_n}_{j,j_2,\ldots,j_k}, \\[0.4em]
{\sigma_{s_1, \ldots, s_l}^{r_1, \ldots, r_{m-l}}}^{(n+1,m)}
  &\coloneqq {\sigma_{s_1, \ldots, s_l}^{r_1, \ldots, r_{m-l}}}^{(n,m)}
     + \sum_{\substack{\{i_1,\ldots,i_n\}\,\sqcup\,\{k_1,\ldots,k_{m-n}\}}}
       \sigma^{r_1,\ldots,r_{m-l},k_1,\ldots,k_{m-n},i_1,\ldots,i_n}_{s_1,\ldots,s_l}.
\end{align*}
This shows that $P(n,m) \implies P(n+1,m)$, $n < m$, completing the induction in $n$. To finish the proof, we only need to establish the remaining link $P(m,m) \implies P(0,m+1)$. 

Assume $P(m,m)$ holds. To show $P(0,m+1)$ holds, we first need to cancel the terms 
\[
\sum_{ \substack{ \{i_1,\dots,i_m\} \\ \subset \{1, \dots, d\} } } h_{i_1}^4 \dots h_{i_m}^4 \sigma_{i_1, \dots i_m}^{(m,m)}(h_{i_1}, \dots, i_m).
\]
For that purpose, we introduce the following sequence of semi-discrete problems: 
\[
A_h^{(i_1,\dots,i_m)} \gamma_{i_1,\dots,i_m}^{(0,m+1)}(h_{i_1},\dots,h_{i_m}) = \sigma_{i_1,\dots,i_m}^{(m,m)}(h_{i_1},\dots,h_{i_m}). 
\]
By Lemma \ref{technicallemma}, the truncation errors of such a sequence of semi-discrete problems satisfy:  
\begin{align}\label{label}
    & h_{i_1}^4 \dots h_{i_m}^4 \left[ A_h^{(i_1,\dots,i_m)} - A_h \right] \gamma_{i_1,\dots,i_m}^{(0,m+1)} (h_{i_1},\dots,h_{i_m}) \notag\\
    = & \sum_{q = m+1}^d \sum_{ \substack{ \{l_1,\dots,l_q\} \\ \subset \{1,\dots,d\} } } h_{l_1}^4 \dots h_{l_q}^4 \gamma_{l_1,\dots,l_q}(h_{l_1}, \dots, h_{l_q}) \notag\\
    & \qquad + \sum_{q = m+1}^d \sum_{ \substack{ \{l\}\sqcup \{l_2,\dots,l_q\} \\ \subset \{1,\dots, d\} } } h_l^2 h_{l_2}^2 \dots h_{l_q}^2 \beta_{l,l_2,\dots,l_q}(h_{l_2}, \dots, h_{l_q}). 
\end{align}

Now by subtracting such a sequence of semi-discrete problems from the RHS of $P(m,m)$, applying the induction hypothesis to go from the second to the third equality and using (\ref{label}) to go from the fourth to the fifth line: 
\begin{align*}
    & A_h \left[ u - \sum_{k=1}^{m} \sum_{\substack{ \{ j_1, \ldots, j_k \} \\ \subset \{ 1, \ldots, d \} }} h_{j_1}^4 \ldots h_{j_k}^4 \gamma^{(0,m+1)}_{j_1, \ldots, j_k} (h_{j_1}, \ldots, h_{j_k}) -  \sum_{j = 1}^d h_{j}^2 \beta_{j}^{(0,m+1)} (h_{1}, \ldots, h_{j-1}, h_{j+1}, \ldots, h_{d}) \right] - f_h \notag \\
    \coloneqq & A_h \left[ u - \sum_{k=1}^{m-1} \sum_{\substack{ \{ j_1, \ldots, j_k \} \\ \subset \{ 1, \ldots, d \} }} h_{j_1}^4 \ldots h_{j_k}^4 \gamma^{(m,m)}_{j_1, \ldots, j_k} (h_{j_1}, \ldots, h_{j_k}) -  \sum_{j = 1}^d h_{j}^2 \beta_{j}^{(m,m)} (h_{1}, \ldots, h_{j-1}, h_{j+1}, \ldots, h_{d}) \right] - f_h \notag \\
    & \qquad - A_h \left[ \sum_{ \substack{\{i_1,\dots,i_m\} \\ \subset \{1,\dots,d\} }} h_{j_1}^4 \dots h_{j_m}^4 \gamma_{j_1,\dots,j_m}^{(0,m+1)}(h_{j_1}, \dots, h_{j_m}) \right] \\
    &= \sum_{k=m+1}^{d} \sum_{\substack{ \{ j_1, \ldots, j_k \} \\ \subset \{ 1, \ldots, d \} }} h_{j_1}^4 \ldots h_{j_k}^4 \gamma^{(m,m)}_{j_1, \ldots, j_k} ( h_{j_1}, \ldots, h_{j_k}) + \sum_{k=m+1}^{d} \sum_{\substack{\{j\} \sqcup \{ j_2, \ldots, j_k\} \\ \subset \{1, \ldots, d\} } } h_j^2 h_{j_2}^2 \ldots h_{j_k}^2 \beta_{j, j_2, \ldots, j_k}^{(m,m)}( h_{j_2}, \ldots, h_{j_k}) \notag \\
            & \quad + \sum_{\substack{ \{i_1, \ldots, i_m\} \\ \subset \{1, \ldots, d\} } } h_{i_1}^4 \dots h_{i_m}^4 \left[ \sigma_{i_1, \ldots, i_m}^{(m,m)} - A_h \gamma_{i_1,\dots,i_m}^{(0,m+1)}\right](h_{i_1}, \dots, h_{i_m})\notag \\
    &= \sum_{k=m+1}^{d} \sum_{\substack{ \{ j_1, \ldots, j_k \} \\ \subset \{ 1, \ldots, d \} }} h_{j_1}^4 \ldots h_{j_k}^4 \gamma^{(m,m)}_{j_1, \ldots, j_k} ( h_{j_1}, \ldots, h_{j_k}) + \sum_{k=m+1}^{d} \sum_{\substack{\{j\} \sqcup \{ j_2, \ldots, j_k\} \\ \subset \{1, \ldots, d\} } } h_j^2 h_{j_2}^2 \ldots h_{j_k}^2 \beta_{j, j_2, \ldots, j_k}^{(m,m)}( h_{j_2}, \ldots, h_{j_k}) \notag \\
            & \quad + \sum_{\substack{ \{i_1, \ldots, i_m\} \\ \subset \{1, \ldots, d\} } } h_{i_1}^4 \dots h_{i_m}^4 \left[ A_h^{(i_1,\dots,i_m)} - A_h \right] \gamma_{i_1,\dots,i_m}^{(0,m+1)}(h_{i_1}, \dots, h_{i_m})\notag \\
    &= \sum_{k=m+1}^{d} \sum_{\substack{ \{ j_1, \ldots, j_k \} \\ \subset \{ 1, \ldots, d \} }} h_{j_1}^4 \ldots h_{j_k}^4 \left[\gamma^{(m,m)}_{j_1, \ldots, j_k} + \gamma_{j_1,\dots,j_k}\right] ( h_{j_1}, \ldots, h_{j_k}) \\
    & \qquad + \sum_{k=m+1}^{d} \sum_{\substack{\{j\} \sqcup \{ j_2, \ldots, j_k\} \\ \subset \{1, \ldots, d\} } } h_j^2 h_{j_2}^2 \ldots h_{j_k}^2 \left[ \beta_{j, j_2, \ldots, j_k}^{(m,m)} + \beta_{j,j_2,\dots,j_k}\right]( h_{j_2}, \ldots, h_{j_k}) \notag \\
     &\eqqcolon \sum_{k=m+1}^{d} \sum_{\substack{ \{ j_1, \ldots, j_k \} \\ \subset \{ 1, \ldots, d \} }} h_{j_1}^4 \ldots h_{j_k}^4 \hat{\gamma}_{j_1, \ldots, j_k} ( h_{j_1}, \ldots, h_{j_k}) + \sum_{k=m+1}^{d} \sum_{\substack{\{j\} \sqcup \{ j_2, \ldots, j_k\} \\ \subset \{1, \ldots, d\} } } h_j^2 h_{j_2}^2 \ldots h_{j_k}^2 \hat{\beta}_{j, j_2, \ldots, j_k}( h_{j_2}, \ldots, h_{j_k}).  \notag \\
\end{align*}

Note that we can rewrite the above equality in the following convenient way: 

\begin{align}\label{ref}
    & A_h \left[ u - \sum_{k=1}^{m} \sum_{\substack{ \{ j_1, \ldots, j_k \} \\ \subset \{ 1, \ldots, d \} }} h_{j_1}^4 \ldots h_{j_k}^4 \gamma^{(0,m+1)}_{j_1, \ldots, j_k} (h_{j_1}, \ldots, h_{j_k}) -  \sum_{j = 1}^d h_{j}^2 \beta_{j}^{(0,m+1)} (h_{1}, \ldots, h_{j-1}, h_{j+1}, \ldots, h_{d}) \right] - f_h \notag \\
            &= \sum_{k=m+2}^{d} \sum_{\substack{ \{ j_1, \ldots, j_k \} \\ \subset \{ 1, \ldots, d \} }} h_{j_1}^4 \ldots h_{j_k}^4 \hat{\gamma}_{j_1, \ldots, j_k} ( h_{j_1}, \ldots, h_{j_k}) + \sum_{k=m+2}^{d} \sum_{\substack{\{j\} \sqcup \{ j_2, \ldots, j_k\} \\ \subset \{1, \ldots, d\} } } h_j^2 h_{j_2}^2 \ldots h_{j_k}^2 \hat{\beta}_{j, j_2, \ldots, j_k}( h_{j_2}, \ldots, h_{j_k}) \notag \\
            & \quad + \sum_{\substack{\{j_1,\dots,j_{m+1}\} \\ \subset \{1,\dots, d\}}} h_{j_1}^4 \dots h_{j_{m+1}}^4 \hat{\gamma}_{j_1,\dots,j_{m+1}}(h_{j_1},\dots,h_{j_{m+1}}) \notag\\
            & \quad + \sum_{ \substack{ \{j\}\sqcup \{j_2,\dots,j_{m+1}\} \\ \subset \{1,\dots, d\} } } h_j^2 h_{j_2}^2 \dots h_{j_{m+1}}^2 \hat{\beta}_{j,j_2,\dots,j_{m+1}}(h_{j_2}, \dots, h_{j_{m+1}}). 
\end{align}
By assumption, $\hat{\beta}_{j,j_2,\dots,j_{m+1}}$ are smooth even functions of the mesh sizes, so by Lemma \ref{evenexp}: 
\begin{align*}
    & h_j^2 h_{j_2}^2 \dots h_{j_{m+1}}^2 \hat{\beta}_{j,j_2,\dots,j_{m+1}}(h_{j_2},\dots,h_{j_{m+1}})\\
    =& h_j^2 h_{j_2}^2 \dots h_{j_{m+1}}^2 \left[ \beta_0^{j,j_2,\dots,j_{m+1}} + \sum_{l=1}^m \sum_{\substack{ \{r_1,\dots,r_l\} \\ \subset \{j_2,\dots,j_{m+1}\} } } h_{r_1}^2 \dots h_{r_l}^2 \beta_{r_1,\dots,r_l}^{j,j_2,\dots,j_{m+1}}(h_{r_1}, \dots, h_{r_l}) \right]\\
    =& \sum_{l=0}^m \sum_{ \substack{ \{r_1,\dots,r_l\} \sqcup\{j\}\sqcup \{s_1, \dots, s_{m-l}\} \\ = \{j\}\sqcup\{j_2,\dots,j_{m+1}\} }} h_j^2 h_{s_1}^2 \dots h_{s_{m-l}}^2 h_{r_1}^4 \dots h_{r_l}^4 \beta_{r_1,\dots,r_l}^{j,j_2,\dots,j_{m+1}}(h_{r_1}, \dots, h_{r_l}). 
\end{align*}
Substituting the above into (\ref{ref}) finally yields $P(0,m+1)$: 
\begin{align}
& A_h \left[ u - \sum_{k=1}^{m} \sum_{\substack{ \{ j_1, \ldots, j_k \} \\ \subset \{ 1, \ldots, d \} }} h_{j_1}^4 \ldots h_{j_k}^4 \gamma^{(0,m+1)}_{j_1, \ldots, j_k} (h_{j_1}, \ldots, h_{j_k}) -  \sum_{j = 1}^d h_{j}^2 \beta_{j}^{(0,m+1)} (h_{1}, \ldots, h_{j-1}, h_{j+1}, \ldots, h_{d}) \right] - f_h \notag \\
            &= \sum_{k=m+2}^{d} \sum_{\substack{ \{ j_1, \ldots, j_k \} \\ \subset \{ 1, \ldots, d \} }} h_{j_1}^4 \ldots h_{j_k}^4 \gamma^{(0,m+1)}_{j_1, \ldots, j_k} ( h_{j_1}, \ldots, h_{j_k}) + \sum_{k=m+2}^{d} \sum_{\substack{\{j\} \sqcup \{ j_2, \ldots, j_k\} \\ \subset \{1, \ldots, d\} } } h_j^2 h_{j_2}^2 \ldots h_{j_k}^2 \beta^{(0,m+1)}_{j, j_2, \ldots, j_k}( h_{j_2}, \ldots, h_{j_k}) \notag \\
 & \qquad + \sum_{l=0}^{m+1} \sum_{\substack{ \{r_1,\dots,r_l\} \sqcup \{s_1,\dots,s_{m+1-l}\} \\ \subset \{1,\dots, d\} }} h_{s_1}^2 \dots h_{s_{m+1-l}}^2 h_{r_1}^4 \dots h_{r_{l}}^4 {\sigma_{r_1,\dots,r_l}^{s_1,\dots,s_{m+1-l}}}^{(0,m+1)}(h_{r_1}, \dots, h_{r_l}) \notag, 
\end{align}
where 
\begin{align*}
    \gamma_{j_1,\dots,j_k}^{(0,m+1)} & \coloneqq \hat{\gamma}_{j_1,\dots,j_k} \text{ for $k \geq m+2$,}\\
    \beta_{j,j_2,\dots,j_k}^{(0,m+1)} & \coloneqq \hat{\beta}_{j,j_2,\dots,j_k} \text{ for $k \geq m+2$,}\\
    \sigma_{r_1,\dots,r_{m+1}}^{(0,m+1)} &\coloneqq \hat{\gamma}_{r_1,\dots, r_{m+1}}, \\
    {\sigma_{r_1,\dots,r_l}^{s_1, \dots, s_{m+1-l}}}^{(0,m+1)} & \coloneq \beta_{r_1,\dots,r_l}^{s_1,\dots,s_{m+1-l},r_1,\dots,r_l}. 
\end{align*}

    This completes the induction argument and hence the proof of Claim \ref{claim}.  
\end{proof}

The principal remaining step to justify the above formal derivation for a specific PDE is to verify the validity of Assumption~\ref{assumption}, which guarantees the required smooth, even expansions of the truncation errors. 

Although these regularity hypotheses may seem restrictive, they are satisfied by a wide class of linear elliptic and parabolic PDEs. 
In the next section, we verify that Assumotions ~\ref{newasumption} and ~\ref{assumption} hold for the Poisson equation under certain regularity hypotheses, and other important model problems likewise fit into this framework.

\section{Application to the Poisson Problem}
\label{sec:appl}

In this section, we first confirm that the Poisson equation falls within the theoretical framework required for the higher‐order combination technique for sufficiently regular data, and then present numerical experiments illustrating its performance.

\subsection{Theoretical Error Analysis for the Poisson Equation}

We consider the Poisson problem 
\begin{align}
\sum_{k=1}^d \partial_k^2 u(x) &= f(x), \quad x \in I^d, \notag \\
u(x) &= 0, \qquad x \in \partial I^d.
\label{Poisson}
\end{align}
\\Equivalently, let \(A:C^\infty(I^d)\to C^\infty(I^d)\) be the linear operator
\((Aw)(x):=\sum_{k=1}^d \partial_k^2 w(x)\).
Then \eqref{Poisson} can be written compactly as
$Au=f$ in $I^d$, $u|_{\partial I^d}=0$.
Assumption \ref{newasumption} requires that the continuous
solution possesses uniformly bounded mixed derivatives of sufficiently high order and that these derivatives vanish on the boundary. As is well known,
solutions of elliptic problems on the cube do not automatically inherit such smoothness from the right-hand side $f$ in the presence of boundary corners. Therefore,
compatibility conditions on 
$f$ must be imposed.

Let $f_{\mathbf{k}}$ denote the sine–series coefficients of $f$, where
$\mathbf{k}=(k_1,\ldots,k_d)\in\mathbb{N}^d$. A regularity result, presented explicitly for the case \(n=2\) in \cite{reisinger2013analysis}, states that if
\begin{equation}\label{eqq}
    \sum_{\mathbf{k}\in\mathbb{N}^d}
\;\sum_{\|\mathbf{i}\|_1\le \lfloor d/2\rfloor}
\frac{k_1^{4n+i_1}\cdots k_d^{4n+i_d}}{(k_1^2+\cdots+k_d^2)^2}\,
f_{\mathbf{k}}^{\,2} < \infty,
\end{equation}
then the solution of \eqref{Poisson} satisfies $u\in X^d_{2n}$, i.e.\ all mixed
derivatives of total order $\le 2n$ are continuous and vanish on $\partial I^d$.
Thus, the Poisson solution has the smoothness required for sparse–grid analysis
provided the sine coefficients of $f$ decay sufficiently quickly.

These regularity requirements may seem restrictive, but they are in fact common in practical high-dimensional PDE applications. Typically, the underlying
problem is posed on $\mathbb{R}^d$ and only localised to a bounded box for
computability. The unbounded solution is smooth, and the localised problem with
asymptotic boundary conditions differs from a smooth function only by a small
perturbation. Under standard stability assumptions, this perturbation produces
a controlled and negligible contribution to the sparse grid error. A more
detailed discussion of this localisation argument is given in \cite{reisinger2013analysis}. 

Although we have invoked $C^\infty$ uniformly bounded mixed derivatives of all orders for notational convenience, in the presence of the continuous (and discrete) maximum principle, one can actually reduce the requirement to bounded mixed derivatives up to sufficiently high order. Moreover, for the Poisson problem, it is enough that all mixed derivatives containing at least one even–order derivative in some coordinate, up to sufficiently high order, remain bounded and vanish on the boundary. As the analysis in this section
shows, these are precisely the higher-order mixed derivatives that appear in the truncation error formula of the finite-difference approximation of the Laplacian. This weaker assumption still guarantees both stability and the
validity of the truncation error expansion. Our use of the stronger smoothness framework simply avoids repeatedly specifying the precise highest derivative
needed.

We discretize \(A\) via the standard second order central finite difference operator. By a simple Truncated Taylor expansion, for any smooth function $w$ and any $x \in I^d$,
\begin{equation}\label{evenremark}
\partial_{k}^2 w(x) = \frac{1}{h_k^2} \left[ w(x + h_k \mathbf{e}_k) - 2 w(x) + w(x - h_k \mathbf{e}_k) \right] - \frac{h_k^2}{12} \partial_{k}^4 w(x) - 2 \frac{h_k^4}{6!} \left[ \partial_{k}^6 w(x_*^{(k)}(h_k)) \right],
\end{equation}
where $x_{*}^{(k)}(h_k)$ is on the line from $x-h_k \mathbf{e}_k$ to from $x+h_k \mathbf{e}_k$,
with $\mathbf{e}_k$ the $k$-th unit vector. 

We discretise the domain on a Cartesian grid with mesh sizes \(h_1,\ldots,h_d\).
For each coordinate \(j\), set \(I_{h_j}:=\{0,h_j,2h_j,\ldots,1\}\).
Occasionally, for a subset \(\{i_1, \ldots, i_m\}\subseteq\{1,\ldots,d\}\), we shall discretise only in the directions in \( \{i_1, \ldots, i_m\} \) and keep the complementary coordinates continuous. 
The partial grid is
\[
I_h^{(i_1, \ldots, i_m)}:=\Bigl\{x\in I^d:\ x_j\in I_{h_j} \text{for }j\in \{i_1, \ldots, i_m\},\ \text{and }x_j\in[0,1]\ \text{for }j\notin \{i_1, \ldots, i_m\}\Bigr\}.
\]
and we write 
\[
w_{\mathbf{i}}(y) := w(x_1, \ldots, x_d)
\quad \text{where} \quad
x_j = i_j h_j \text{ for } j \in \{i_1, \ldots, i_m\}, \quad 
y = (x_{j_1}, \ldots, x_{j_{d-m}}) \in [0,1]^{d-m},
\]
\[
\text{and} \quad \{j_1, \ldots, j_{d-m}\} \sqcup \{i_1, \ldots, i_m\} = \{1, \ldots, d\}.
\]
Then, by \eqref{evenremark}, the standard second order central difference semi-discretization of \(A\) in the directions \(\{i_1,\ldots,i_m\}\) evaluated at the semi-discrete node \(x\) is
\[
\bigl(A_h^{(i_1,\ldots,i_m)} w\bigr)(x)
=\sum_{k\in \{i_1, \ldots, i_m\}}\frac{w_{\mathbf i+\mathbf e_k}(y)-2w_{\mathbf i}(y)+w_{\mathbf i-\mathbf e_k}(y)}{h_k^2}
\;+\;\sum_{k\notin \{i_1, \ldots, i_m\}}\partial_k^2 w(x).
\]

The regularity result above (\ref{eqq}) ensures that the Poisson problem admits a solution \(u\) with exactly the
smoothness and boundary behaviour required in Assumption~\ref{newasumption}.  

For a fixed subset of discrete directions \(\{i_1,\ldots,i_m\}\), the semi–discrete problem
\[
A_h^{(i_1,\ldots,i_m)} w(\cdot; h_{i_1},\ldots,h_{i_m}) \;=\; g(\cdot; h_{i_1},\ldots,h_{i_m})
\]
is an elliptic equation in the continuous coordinates, coupled with a finite difference
Laplacian in the remaining ones.  
Therefore, if the semi-discrete right-hand side \(g\) satisfies the same type of regularity
assumptions as \(f\) (namely smoothness and vanishing derivatives of all orders on \(\partial I^d\)),
then the solution \(w\) of the semi–discrete problem inherits the same properties.

Moreover, the symmetry of the finite difference stencil implies that the dependence of \(w\)
on each mesh size \(h_{i_k}\) is smooth and even, as required by Assumption~\ref{newasumption}. 

Thus every semi–discrete Poisson subproblem satisfies Assumption~\ref{newasumption} provided
its right–hand side \(g\) satisfies the corresponding smoothness and boundary–vanishing
conditions.

Having established that the semi–discrete Poisson subproblems admit smooth
solutions satisfying Assumption~\ref{newasumption}, we now
verify that their truncation error also possesses the structure specified in
Assumption~\ref{assumption}.

By the Taylor expansion in even powers \eqref{evenremark}, the local discretisation
error of the semi-discrete operator $A_h^{(i_1,\ldots,i_m)}$ satisfies
\begin{align*}
   \left[ A_h^{(i_1, \ldots, i_m)} - A_h \right] w(x) & =  \sum_{k \notin \{i_1, \ldots, i_m\}} \left( \partial_k^2w(x) - \frac{w_{\mathbf i+\mathbf e_k}(y)-2w_{\mathbf i}(y)+w_{\mathbf i-\mathbf e_k}(y)}{h_k^2} \right) \\
  &= \sum_{ k \notin \{i_1, \ldots, i_m\}} h_k^2 \frac{- \partial_k^4 w(x)}{12} + \sum_{ k \notin \{i_1, \ldots, i_m\} } h_k^4 \frac{- \partial_k^6 w(x_*^{(k)}(h_k))}{6!}\\
  & \eqqcolon \sum_{k \notin \{i_1, \ldots, i_m\}} h_k^2 \beta_k(x) + \sum_{k \notin \{i_1, \ldots, i_m\}} h_k^4 \gamma_k(x; h_k).
\end{align*}
Here, $\beta_k(x) = -\tfrac{1}{12}\partial_k^4 w(x)$ is independent of all mesh
sizes, while $\gamma_k(x;h_k)$ is a smooth even function of $h_k$, as all odd powers cancel in \eqref{evenremark}.  

Because $w$ and all its mixed derivatives vanish on $\partial I^{d-m}$ by
Assumption~5.3, and since $\beta_k$ and $\gamma_k(\cdot;h_k)$ are constructed
solely from derivatives of $w$, it follows that $\beta_k$ and
$\gamma_k(\cdot;h_k)$ also vanish on $\partial I^{d-m}$ together with all their
mixed derivatives.  
Thus the truncation error decomposes additively into coordinate-wise
$h_k^2$ and $h_k^4$ contributions, with coefficient functions that are smooth in
the continuous variables, even and smooth in the mesh sizes, and satisfy the
required homogeneous boundary conditions. This verifies Assumption~5.4 for the
semi–discrete Poisson operator.

The following theorem is therefore an immediate consequence of the abstract error expansion results established earlier.
\begin{thm}\label{thm:poisson}
    Let $u$ be a smooth solution of the Poisson problem (\ref{Poisson}) and $\mathbf{u}_h$ the finite difference approximation on a grid $\mathbf{x}_h$. Then 
    \[
    u(\mathbf{x}_h) - \mathbf{u}_h = \sum_{j=1}^d h_j^2 \beta_j(\mathbf{x}_h; h_1, \ldots, h_{j-1}, h_{j+1}, \ldots, h_d) + \sum_{m=1}^d \sum_{\substack{ \{j_1, \ldots, j_m\} \\ \subset \{1, \ldots, d\} }} h_{j_1}^4 \ldots h_{j_m}^4 \gamma_{j_1, \ldots, j_m}(\mathbf{x}_h; h_{j_1}, \ldots, h_{j_m})
    \]
    where $\beta_j$ and $\gamma_{j_1, \ldots, j_m}$ are uniformly bounded functions. 
\end{thm}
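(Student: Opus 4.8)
The plan is to establish Theorem~\ref{thm:poisson} as a direct corollary of the abstract expansion in Theorem~\ref{mainresult}. Since that theorem already delivers precisely the stated expansion \eqref{HOexpansion} for any linear operator $A$ and second-order discretisation $A_h$ satisfying Assumptions~\ref{newasumption} and~\ref{assumption}, the whole task reduces to verifying those two assumptions for the Poisson operator $A=\sum_{k=1}^d \partial_k^2$ and its standard central-difference discretisation $A_h$. First I would fix the setting: identify $A$ with the Laplacian, $A_h$ with the second-order central stencil, and the semi-discrete operators $A_h^{(i_1,\ldots,i_m)}$ as those that apply central differences in the directions $\{i_1,\ldots,i_m\}$ and leave the complementary directions continuous, exactly as displayed above.

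To verify Assumption~\ref{newasumption}, I would invoke the regularity result \eqref{eqq}: when the sine coefficients of $f$ decay sufficiently fast, the solution $u$ lies in $X^d_{2n}$, so all its mixed derivatives up to the required order are continuous and vanish on $\partial I^d$. For the semi-discrete subproblems, I would argue that each is an elliptic equation in the continuous coordinates coupled with a discrete Laplacian in the remaining ones; hence, provided the right-hand side $g$ inherits the smoothness and boundary-vanishing conditions, so does the subproblem solution $w$. The evenness of $w$ in each mesh size $h_{i_k}$ follows from the symmetry of the central stencil, since $h_{i_k}\mapsto -h_{i_k}$ leaves the difference quotient invariant. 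Together these yield the three properties required in Assumption~\ref{newasumption}.

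To verify Assumption~\ref{assumption}, I would use the even Taylor expansion \eqref{evenremark}. Applying it to $[A_h^{(i_1,\ldots,i_m)}-A_h]w$ shows the truncation error is a sum over the non-discretised directions $k\notin\{i_1,\ldots,i_m\}$ of terms $h_k^2\beta_k(x)+h_k^4\gamma_k(x;h_k)$, where $\beta_k=-\tfrac{1}{12}\partial_k^4 w$ is mesh-size independent and $\gamma_k(\cdot;h_k)$ is smooth and even in $h_k$, all odd powers cancelling in \eqref{evenremark}. Because $\beta_k$ and $\gamma_k$ are built purely from derivatives of $w$, which vanish with all their mixed derivatives on $\partial I^{d-m}$, the coefficients satisfy the boundary and regularity conditions of Assumption~\ref{assumption}. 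With both assumptions confirmed, Theorem~\ref{mainresult} immediately produces the claimed expansion, and the uniform bounds on $\beta_j$ and $\gamma_{j_1,\ldots,j_m}$ follow from the uniform bounds on the mixed derivatives of $u$ guaranteed by \eqref{eqq}.

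The main obstacle, and the step demanding the most care, is the verification of Assumption~\ref{assumption}: ensuring that the coefficient functions remain uniformly bounded independently of the mesh sizes and that all their mixed derivatives genuinely vanish on the boundary. This hinges on the full strength of the regularity hypothesis \eqref{eqq}, which must supply enough bounded mixed derivatives of $u$ to control the sixth-order remainder $\partial_k^6 w(x_*^{(k)}(h_k))$ uniformly in $h_k$, together with the observation that the homogeneous boundary conditions propagate from $u$ to every derivative-built coefficient. Once this is secured, the remainder of the argument is a routine appeal to the already-established abstract machinery.
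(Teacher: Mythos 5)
Your proposal is correct and follows essentially the same route as the paper: the paper's proof of Theorem~\ref{thm:poisson} is exactly a reduction to Theorem~\ref{mainresult}, with Assumption~\ref{newasumption} verified via the regularity condition \eqref{eqq} and the symmetry of the central stencil, and Assumption~\ref{assumption} verified via the even Taylor expansion \eqref{evenremark} applied to $[A_h^{(i_1,\ldots,i_m)}-A_h]w$. No substantive differences to report.
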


\begin{proof}
Since we have shown in this section that the Poisson problem on $I^d$ meets the hypotheses of Theorem~\ref{mainresult}, the error expansion follows immediately.
\end{proof}

\subsection{Numerical Results}

We test the higher-order sparse combination technique on the Poisson problem with smooth data, specifically with
\[
f(x) = d\,\pi^2 \prod_{i=1}^d \sin(\pi x_i),
\]
so that
\[
u(x) = -\prod_{i=1}^d \sin(\pi x_i)
\]
is smooth on $[0,1]^d$.  
This choice of $f$ also ensures that the weaker versions of Assumptions~\ref{newasumption} and~\ref{assumption} hold: in this setting it is sufficient that all mixed derivatives containing at least one even-order derivative in some coordinate vanish on the boundary. These conditions are slightly weaker than the full assumptions but remain fully compatible with the analysis above.

{
\color{red}
}

We evaluate the hierarchical surplus
\(
\hat\epsilon_n = \lvert \hat u_{n+1}(x^*) - \hat u_n(x^*) \rvert
\)
at the interior point \(x^* = (0.25, 0.5, \dots, 0.25, 0.5)\) and refine the discretization up to level \(n = 12\) in order to assess convergence. This allows us both to verify that the proposed higher-order sparse grid (HO-SG) method attains the expected fourth-order accuracy and to benchmark its decay against the standard sparse grid (SG), the standard full grid (FG), and the higher-order full grid (HO-FG) constructed from FG by standard Richardson extrapolation. Figures~\ref{dims_surplus_level} and~\ref{dims_surplus_cost} report the surplus decay with respect to the refinement level and the computational cost, respectively, for dimensions \(d = 1, \dots, 7\).

From the Surplus vs Level plot (Figure~\ref{dims_surplus_level}), we observe that the standard FG and SG methods exhibit decay rates consistent with second-order convergence, while both higher-order methods display significantly steeper slopes. In particular, the HO-SG surplus aligns closely with the reference slope \(-4\) over a wide range of levels and dimensions, confirming that the extrapolation-based combination technique successfully lifts the underlying second-order discretization to fourth-order accuracy. The dependence on the dimension is weak: only a very mild deterioration of the observed convergence rate is observed. This can be explained by the smoothness of the Poisson test problem, which satisfies the regularity assumptions underlying sparse grid error estimates. Consequently, the HO-SG method consistently outperforms the standard SG and retains a clear higher-order character in the asymptotic regime.

The Surplus vs Cost plot (Figure~\ref{dims_surplus_cost}) further demonstrates the practical advantage of HO-SG. While full-grid methods (FG and HO-FG) become rapidly prohibitive as the dimension grows, the sparse grid constructions achieve comparable accuracy at significantly reduced cost. Crucially, HO-SG combines this cost efficiency with a markedly faster surplus decay than the standard SG, achieving substantially smaller errors for a given number of degrees of freedom. Overall, these results show that HO-SG provides the best trade-off between accuracy and computational cost and offers strong numerical evidence consistent with the fourth-order convergence claims established theoretically in this work.

\begin{figure}[ht]
  \centering
  \includegraphics[width=1.1\linewidth]{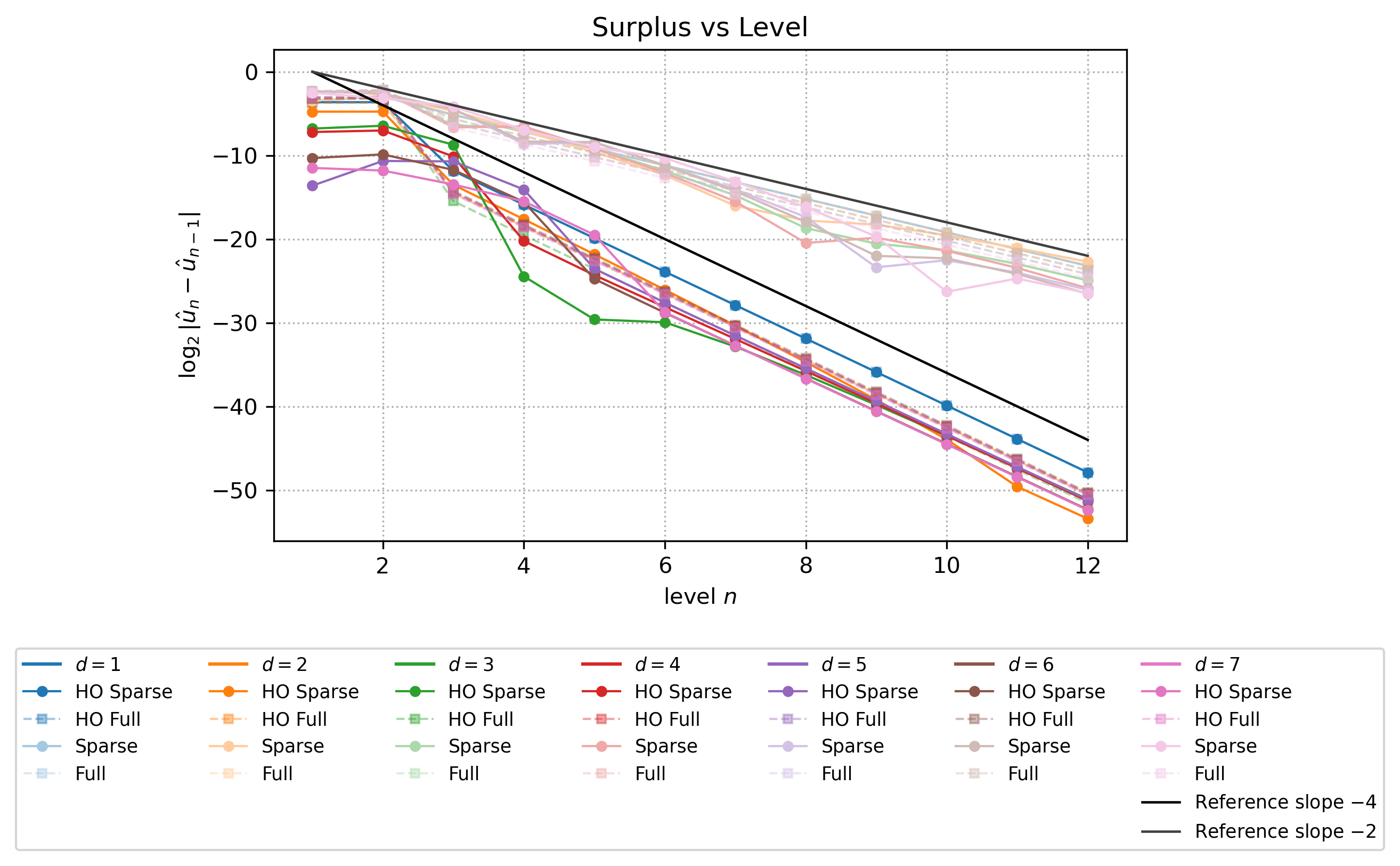}
  \caption{Surplus vs.\ level \(n\) for HO‐SG, SG, HO‐FG, and FG for $1\le d\le 7$.}
  \label{dims_surplus_level}
\end{figure}

\begin{figure}[ht]
  \centering
  \includegraphics[width=1.1\linewidth]{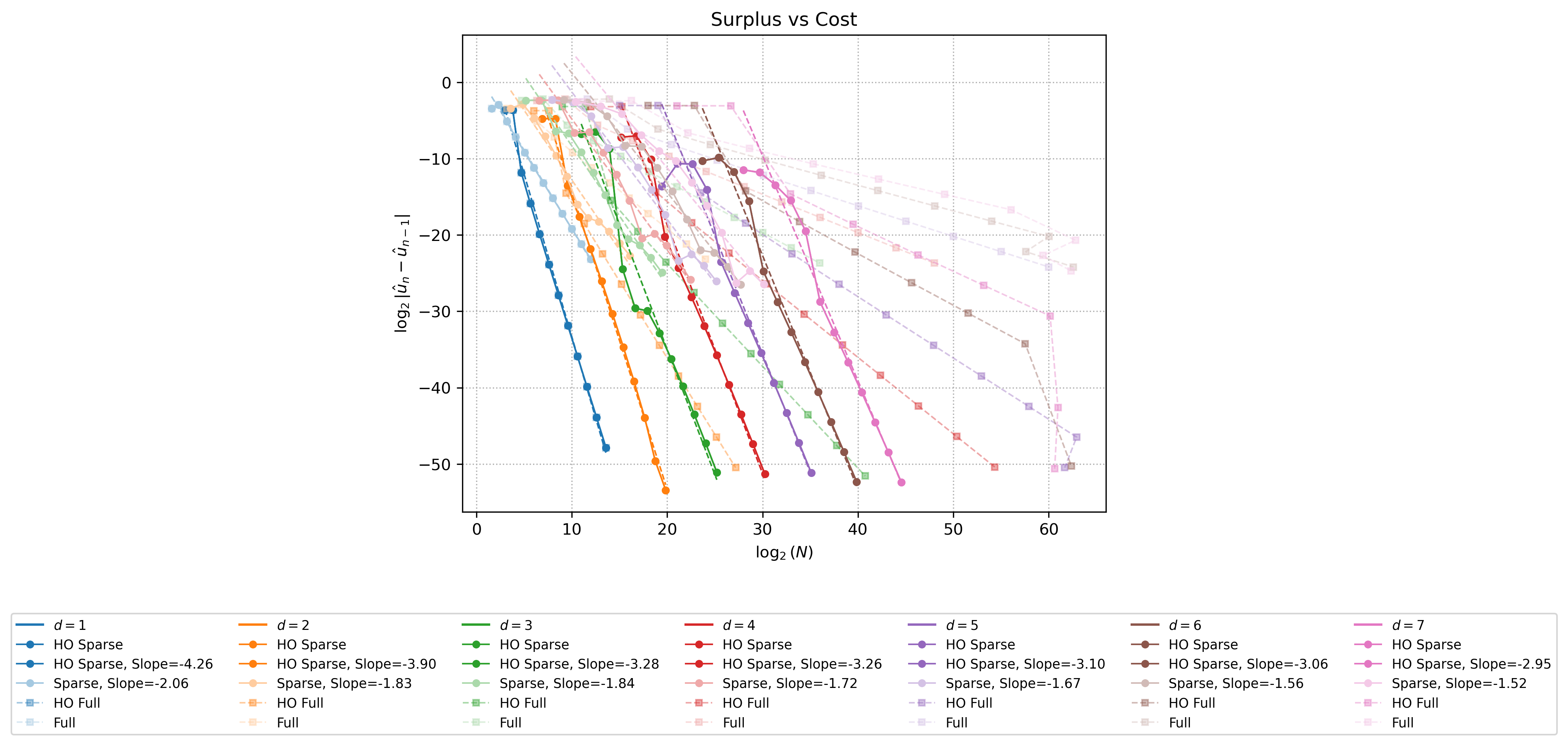}
  \caption{Surplus vs.\ number of grid points \(N\) for HO‐SG, SG, HO‐FG, and FG for $1\le d\le 7$.
  {\color{red}}}
  \label{dims_surplus_cost}
\end{figure}

\pagenumbering{gobble}


\begin{appendices}
\addcontentsline{toc}{section}{Appendices}



\section{Two Dimensional Error Expansion}
\label{proof-2d}

In this section, we give an informal proof of Theorem~\ref{mainresult} for the two-dimensional case \(d=2\) in order to illustrate the main ideas underlying the general argument. In particular, we establish the error expansion
\[
u - u_h = h_1^2 \beta_1(h_2) + h_2^2 \beta_2(h_1) + h_1^4 \gamma_1(h_1) + h_2^4 \gamma_2(h_2) + h_1^4 h_2^4 \gamma_{12}(h_1,h_2),
\]
which already isolates the core mechanisms responsible for higher-order error cancellation while avoiding the more cumbersome notation required in higher dimensions. The general \(d\)-dimensional case, treated rigorously in Section~5, builds on the same principles and extends the argument by accounting for the additional mixed terms that arise when \(d>2\), together with a precise statement of the required assumptions.

The starting point is a consistency assumption of order two 
(see Assumption~\ref{assumption} later for the general case) of the form
\[
A_h u - f = h_1^2 \sigma_1(h_1) + h_2^2 \sigma_2(h_2) + h_1^2 h_2^2 \sigma_{12}(h_1, h_2)
\]
where $\sigma_1$, $\sigma_2$ and $\sigma_{12}$ are even functions of the mesh sizes and so can be further expanded as 
\[\sigma_i(h_i) = \beta_i + h_i^2 \gamma_i^{(0)}(h_i), \qquad i = 1,2, \]
\[ \sigma_{12}(h_1, h_2) = \beta_{12} + h_1^2 \tilde{\beta}_2^{(0)}(h_1) + h_2^2 \tilde{\beta}_1^{(0)}(h_2) + h_1^2 h_2^2 \gamma_{12}^{(0)}(h_1, h_2) \]
so that the truncation error takes the more convenient form 

\begin{align*}
    A_h u - f &= h_1^2 \beta_1 + h_2^2 \beta_2 + h_1^2 h_2^2 \left[\beta_{12} + h_1^2\tilde{\beta}_2^{(0)}(h_1) \right] + h_1^2 h_2^4 \tilde{\beta}_1^{(0)}(h_2) \\
    & \qquad + h_1^4 \gamma_1^{(0)}(h_1) + h_2^4 \gamma_2^{(0)}(h_2) + h_1^4 h_2^4 \gamma_{12}^{(0)}(h_1, h_2) \\
    &= h_1^2 \beta_1 + h_2^2 \beta_2 + h_1^2 h_2^2 \beta_2^{(0)}(h_1) + h_1^2 h_2^2 \beta_1^{(0)}(h_2) \\
    & \qquad + h_1^4 \gamma_1^{(0)}(h_1) + h_2^4 \gamma_2^{(0)}(h_2) + h_1^4 h_2^4 \gamma_{12}^{(0)}(h_1, h_2) \\
\end{align*}
As a first step, in order to cancel the constant contributions $\beta_1$ and $\beta_2$ 
from the right-hand side, we introduce the following semi-discrete problems:
$$
\begin{cases}
    A_h^{(1)} \beta_2^{(1)}(h_1) = \beta_2 \\
    A_h^{(2)} \beta_1^{(1)}(h_2) = \beta_1
\end{cases}
$$
with respective truncation errors given by
\begin{align*}
    & \left[A_h^{(1)} - A_h \right] \beta_2^{(1)}(h_1) = h_2^2 \bar{\sigma}_{12}(h_1,h_2) = h_2^2 \left[ \tilde{\sigma}_{12} + h_1^2 \tilde{\sigma}_1(h_1) + h_2^2 \tilde{\sigma}_2(h_2) + h_1^2h_2^2 \tilde{\sigma}_{12}(h_1, h_2)\right]\\
    & \quad = h_2^2 \left[ \tilde{\sigma}_{12} + h_1^2 \left(\hat{\sigma}_1 + h_1^2 \hat{\sigma}_1(h_1) \right) + h_2^2 \tilde{\sigma}_2(h_2) + h_1^2h_2^2 \left(\sigma_{12} + h_1^2 \sigma_1(h_1) + h_2^2 \sigma_2(h_2) + h_1^2h_2^2 \sigma_{12}(h_1, h_2)\right) \right]
\end{align*}
where the successive expansions rely on Assumption~\ref{assumption}, and similar for 1 and 2 swapped.

Multiplying by $h_2^2$ and applying the same argument to $\beta_1$ yields
\[
    h_2^2 \left[ A_h^{(1)} - A_h \right] \beta_2^{(1)}(h_1) = h_2^4 \bar{\gamma}_2(h_2) + h_1^2 h_2^4 \delta_1(h_2) + h_1^4 h_2^4 \bar{\gamma}_{12} (h_1, h_2)
\]
and
\[
    h_1^2 \left[ A_h^{(2)} - A_h \right] \beta_1^{(1)}(h_2) = h_1^4 \bar{\gamma}_1(h_1) + h_2^2 h_1^4 \delta_2(h_1) + h_1^4 h_2^4 \bar{\gamma}_{21} (h_1, h_2). 
\]

Then 
\begin{align*}
    A_h & \left[ u - h_1^2 \beta_1^{(1)}(h_2) - h_2^2 \beta_2^{(1)}(h_1)\right]- f  \\
    = & h_1^4 \gamma_1^{(1)}(h_1) + h_2^4 \gamma_2^{(1)}(h_2) + h_1^4 h_2^4 \gamma_{12}^{(1)}(h_1, h_2) + h_1^2 h_2^2 \beta_1^{(0)}(h_2) + h_1^2 h_2^2 \beta_2^{(0)}(h_1) \\
    & \quad + h_1^2 h_2^4 \delta_1^{(0)}(h_2) + h_2^2 h_1^4 \delta_2^{(0)}(h_1) 
\end{align*}

Now consider the semi-discrete problems
$$
\begin{cases}
    A_h^{(1)} \gamma_1^{(2)}(h_1) = \gamma_1^{(1)}(h_1) \\
    A_h^{(2)} \gamma_2^{(2)}(h_2) = \gamma_2^{(1)}(h_2)
\end{cases}
$$
with truncation errors given by 
\begin{align*}
    \left[ A_h^{(1)} - A_h \right]\gamma_1^{(2)}(h_1) & = h_2^2 \bar{\sigma}_{12}(h_1,h_2)\\
    & =  h_2^2\left[ \tilde{\sigma}_{12} + h_1^2 \tilde{\sigma}_1^{(1)}(h_1) + h_2^2 \tilde{\sigma}_2(h_2) + h_1^2h_2^2 \tilde{\sigma}_{12}(h_1, h_2)\right]
\end{align*}
with 
\[
    h_1^4 \left[ A_h^{(1)} - A_h \right]\gamma_1^{(2)}(h_1) = h_2^2 h_1^4 \tilde{\delta}_2(h_1) + h_1^4 h_2^4 \tilde{\gamma}_{12}(h_1, h_2)
\]
and similarly with $1$ and $2$ swapped. 
Then 
\begin{align*}
    A_h & \left[ u - h_1^2 \beta_1^{(1)}(h_2) - h_2^2 \beta_2^{(1)}(h_1) - h_1^4 \gamma_1^{(2)}(h_1) - h_2^4 \gamma_2^{(2)}(h_2) \right]- f \\
    & =  h_1^4 h_2^4 \gamma_{12}^{(2)}(h_1, h_2) + h_1^2 h_2^2 \beta_1^{(0)}(h_2) + h_1^2 h_2^2 \beta_2^{(0)}(h_1) \\
    & \quad +  h_1^2 h_2^4 \delta_1^{(1)}(h_2)  +  h_2^2 h_1^4 \delta_2^{(1)}(h_1)
\end{align*}
To cancel the mixed terms $\beta_1^{(0)}(h_2)$ and $\beta_2^{(0)}(h_1)$ on the RHS, 
we next consider the semi–discrete problems
$$
\begin{cases}
    A_h^{(1)} \beta_2^{(2)}(h_1) = \beta_2^{(0)}(h_1)\\
    A_h^{(2)} \beta_1^{(2)}(h_2) = \beta_1^{(0)}(h_2)
\end{cases}
$$
with truncation errors given by
\begin{align*}
    \left[ A_h^{(1)} - A_h \right]\beta_2^{(2)}(h_1) & = h_2^2 \bar{\sigma}_{12}(h_1,h_2)\\
    & =  h_2^2\left[ \tilde{\sigma}_{12} + h_1^2 \tilde{\sigma}_1(h_1) + h_2^2 \tilde{\sigma}_2(h_2) + h_1^2h_2^2 \tilde{\sigma}_{12}(h_1, h_2)\right]
\end{align*}
with 
\[
h_1^2 h_2^2 \left[ A_h^{(1)} - A_h \right]\beta_2^{(2)}(h_1) = h_2^2 h_1^4 \tilde{\delta}_1(h_2) + h_1^4 h_2^4 \tilde{\gamma}_{12}(h_1, h_2)
\]
and  
\[
h_1^2 h_2^2 \left[ A_h^{(2)} - A_h \right]\beta_1^{(2)}(h_2) = h_1^2 h_2^4 \tilde{\delta}_2(h_1) + h_1^4 h_2^4 \tilde{\gamma}_{21}(h_1, h_2). 
\]
Then 
\begin{align*}
    A_h & \left[ u - h_1^2 \beta_1^{(1)}(h_2) - h_2^2 \beta_2^{(1)}(h_1) - h_1^4 \gamma_1^{(2)}(h_1) - h_2^4 \gamma_2^{(2)}(h_2) - h_1^2 h_2^2 \beta_1^{(2)}(h_2) - h_1^2h_2^2 \beta_2^{(2)}(h_1)  \right]- f \\
    = & \ h_1^4 h_2^2 \gamma_{12}^{(3)} (h_1, h_2) + h_1^2 h_2^4 \delta_1^{(2)}(h_2) + h_2^2 h_1^4 \delta_2^{(2)}(h_1).
\end{align*}

Finally, consider the semi-discrete problems
$$
\begin{cases}
    A_h^{(1)} \delta_2^{(3)}(h_1) = \delta_2^{(2)}(h_1) \\
    A_h^{(2)} \delta_1^{(3)}(h_2) = \delta_1^{(2)}(h_2)
\end{cases}
$$
with truncation errors given by 
$$
\begin{cases}
    \left[ A_h^{(1)} - A_h \right] \delta_2^{(3)} (h_1) = h_2^2 \bar{\sigma}_{12}(h_1, h_2).  \\
    \left[ A_h^{(2)} - A_h \right] \delta_1^{(3)} (h_2) = h_1^2 \bar{\sigma}_{21}(h_1, h_2). 
\end{cases}
$$
For notational simplicity, we relabel the resulting remainder terms as \(\bar{\sigma}_{12} = \tilde{\gamma}_{12}\) and \(\bar{\sigma}_{21} = \tilde{\gamma}_{21}\), and write
$$
\begin{cases}
    h_2^2 h_1^4 \left[ A_h^{(1)} - A_h \right] \delta_2^{(3)} (h_1) = h_1^4 h_2^4 \tilde{\gamma}_{12}(h_1, h_2). \\
    h_1^2 h_2^4 \left[ A_h^{(2)} - A_h \right] \delta_1^{(3)} (h_2) = h_1^4 h_2^4 \tilde{\gamma}_{21}(h_1, h_2).
\end{cases}
$$
Subtracting $\delta_1^{(3)}$ and  $\delta_2^{(3)}$, finally yields
\begin{align*}
    A_h & [ u - h_1^2 \beta_1^{(1)}(h_2) - h_2^2 \beta_2^{(1)}(h_1) - h_1^4 \gamma_1^{(2)}(h_1) - h_2^4 \gamma_2^{(2)}(h_2) \\
    & \qquad - h_1^2 h_2^2 \beta_1^{(2)}(h_2) - h_1^2h_2^2 \beta_2^{(2)}(h_1) - h_2^2 h_1^4 \delta_2^{(3)}(h_1) - h_1^2 h_2^4 \delta_1^{(3)}(h_2) ]- f \\
   & = h_1^4 h_2^4 \gamma_{12}^{(4)} (h_1, h_2).
\end{align*}

This proves the desired result in two dimensions. 

The general \(d\)-dimensional case, treated rigorously in the next section, builds on the same principles, provides rigorous justification for the error expansions used, and accounts for the additional mixed terms that appear when \(d>2\).

\section{Proofs from Section \ref{sec:Chapter 5} }\label{Appendix}
\begin{lem}
Let $\{ i_1, \ldots, i_n \} \subseteq \{k_1, \ldots, k_p\} \subseteq \{1, \ldots, d\}$ and $\{j_1, \ldots, j_m\} \cap \{k_1, \ldots, k_p\} = \emptyset$. 

Let 
\(
\phi(\cdot; h_{i_1}, \ldots, h_{i_n})
\)
be any sufficiently regular function, smooth in its spatial variables and 
smooth and even in each mesh-size component.  
Suppose that the semi-discrete problem
\[
A_h^{(k_1, \ldots, k_p)} \psi(\cdot; h_{k_1}, \ldots, h_{k_p})
    = \phi(\cdot; h_{k_1}, \ldots, h_{k_p})
\]
admits a solution $\psi$ satisfying the regularity requirements of 
Assumption~\ref{assumption}.  
Then the corresponding solution 
\(
\psi(\cdot; h_{i_1}, \ldots, h_{i_n}) \in C^\infty(I^d)
\)
inherits the same smoothness and evenness properties in the mesh sizes, and 
the truncation error satisfies:
\begin{align*}
    & h_{j_1}^2 \cdots h_{j_m}^2 \, h_{i_1}^4 \cdots h_{i_n}^4 \, \big[ A_h^{(k_1, \ldots, k_p)} - A_h \big] \psi(h_{k_1}, \ldots, h_{k_p})  \\
    & = \sum_{q = m+n+1}^d \sum_{ \substack{ \{l_1, \ldots, l_q\} \\ \subset \{1, \ldots, d\} }} 
    h_{l_1}^4 \cdots h_{l_q}^4 \, \gamma_{l_1, \ldots, l_q}(\cdot; h_{l_1}, \ldots, h_{l_q}) \\
    & \quad + \sum_{q = m+n+1}^d \sum_{ \substack{ \{l\} \sqcup \{l_2, \ldots, l_q\} \\ \subset \{1, \ldots, d\} }} 
    h_l^2 h_{l_2}^2 \cdots h_{l_q}^2 \, \beta_{l l_2 \ldots l_q}(h_{l_2}, \ldots, h_{l_q}) \\
    & \quad + \sum_{q = n+1}^{m+n} \sum_{ \substack{ \{s_1, \ldots, s_q\} \sqcup \{r_1, \ldots, r_{m+n-q}\} \\ \subset \{1, \ldots, d\} }} 
    h_{r_1}^2 \cdots h_{r_{m+n-q}}^2 \, h_{s_1}^4 \cdots h_{s_q}^4 \, 
    \sigma_{s_1, \ldots, s_q}^{r_1, \ldots, r_{m+n-q}}(h_{s_1}, \ldots, h_{s_q}).
\end{align*}
Here, $\gamma_{l_1, \ldots, l_q}$, $\beta_{l l_2 \ldots l_q}$, and $\sigma_{s_1, \ldots, s_q}^{r_1, \ldots, r_{m+n-q}}$ are smooth functions that are also smooth and even in the mesh-size variables.
\end{lem}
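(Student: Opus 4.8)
The plan is to feed the solution $\psi$ through the truncation-error hypothesis of Assumption~\ref{assumption}, expand the resulting coefficients in even powers via Lemma~\ref{evenexp}, multiply by the prefactor, and then sort each monomial into one of the three advertised structures by a power-counting argument. First I would record that the claimed regularity of $\psi$ is immediate: by Assumption~\ref{newasumption} the solution of $A_h^{(k_1,\ldots,k_p)}\psi=\phi$ is smooth in space, has all mixed derivatives vanishing on the boundary, and is smooth and even in each $h_{k_1},\ldots,h_{k_p}$. Writing $K=\{k_1,\ldots,k_p\}$ and applying Assumption~\ref{assumption} to $\psi$ gives
\[
\bigl[A_h^{(k_1,\ldots,k_p)}-A_h\bigr]\psi=\sum_{T}\Bigl(\prod_{t\in T}h_t^2\Bigr)\,\hat\sigma_T(\cdot;\{h_t\}_{t\in T},\{h_k\}_{k\in K}),
\]
where $T$ ranges over the nonempty subsets of $\{1,\ldots,d\}\setminus K$ and each $\hat\sigma_T$ is smooth in space, boundary-vanishing with all derivatives, and smooth and even in all its mesh-size arguments.

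Next I would apply Lemma~\ref{evenexp} to each $\hat\sigma_T$, expanding it over subsets $S\subseteq T\cup K$ as $\prod_{s\in S}h_s^2\,\delta_{T,S}(\{h_s\}_{s\in S})$, with $\delta_{T,S}$ depending on the mesh sizes indexed by $S$ only and inheriting the spatial smoothness and boundary behaviour. Multiplying through by the prefactor $\prod_r h_{j_r}^2\prod_r h_{i_r}^4$ and collecting, the total power of a variable $h_s$ is $4+2[s\in S]$ for $s\in\{i_1,\ldots,i_n\}$ and for $s\in\{j_1,\ldots,j_m\}\cap T$; exactly $2$, with coefficient independent of $h_s$, for $s\in\{j_1,\ldots,j_m\}\setminus T$; $2+2[s\in S]$ for $s\in T\setminus\{j_1,\ldots,j_m\}$; and $2[s\in S]$ for $s\in K\setminus\{i_1,\ldots,i_n\}$. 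The coefficient $\delta_{T,S}$ depends on $h_s$ precisely when $s\in S$.

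The heart of the argument is the combinatorial classification of each monomial, organised by the total number $q$ of variables carrying positive power. Since $\{i_1,\ldots,i_n\}$ and $\{j_1,\ldots,j_m\}$ are always present, $q\ge m+n$. When $q=m+n$ one necessarily has $T\subseteq\{j_1,\ldots,j_m\}$ and no $K\setminus\{i_1,\ldots,i_n\}$ variable present; the order-$4$ indices are then $\{i_1,\ldots,i_n\}\cup T$, of cardinality $n+|T|\in\{n+1,\ldots,n+m\}$, and the remaining $m-|T|$ indices sit at exactly order $2$ with coefficient independent of them, which is precisely the $\sigma$-structure and matches the range $n+1\le q\le m+n$. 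When $q\ge m+n+1$ the monomial falls into the $\gamma$-structure if every present variable has order $\ge 4$, and otherwise into the $\beta$-structure, taking as distinguished index $l$ any variable sitting at exactly order $2$ with an $h_l$-independent coefficient. The smoothness, boundary-vanishing and evenness of $\gamma$, $\beta$, $\sigma$ are then inherited directly from the $\delta_{T,S}$.

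The delicate point — and where I expect the real work — is the set $K\setminus\{i_1,\ldots,i_n\}$, whose variables can land at order exactly $2$ while their coefficient still depends on them, a configuration matched by none of the three structures. In the way the lemma is actually invoked in the proof of Claim~\ref{claim} one always has $K=\{i_1,\ldots,i_n\}$, so this set is empty and the trichotomy above closes immediately. To obtain the stated generality $\{i_1,\ldots,i_n\}\subseteq K$ I would add a peeling step: for each offending $s$, use evenness to write $h_s^2\delta_{T,S}=h_s^2\,\delta_{T,S}|_{h_s=0}+h_s^4\,\tilde\delta$, the first summand now being order $2$ with an $h_s$-independent coefficient (feeding the $\beta$-structure) and the second being order $4$, after which the earlier classification applies. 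Verifying that the index ranges $q\ge m+n+1$ and $n+1\le q\le m+n$ survive all of these regroupings is the only genuinely intricate bookkeeping; the accompanying regularity statements are routine.
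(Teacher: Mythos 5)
Your proposal is correct and follows essentially the same route as the paper's Appendix~\ref{Appendix} proof: apply Assumption~\ref{assumption} to $\psi$, expand the coefficients in even powers via Lemma~\ref{evenexp}, multiply by the prefactor, and sort the resulting monomials into the three structures. The only difference is organisational --- you expand in all mesh-size variables at once and classify by power counting with a final peeling step for the $\{k_1,\ldots,k_p\}\setminus\{i_1,\ldots,i_n\}$ variables, whereas the paper first expands only in the $h_{k}$-variables and then runs an explicit four-way case split, handling that same delicate configuration by a full re-expansion via Lemma~\ref{evenexp} in its second case; the two devices are equivalent.
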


\begin{proof}
    By Assumtion \ref{assumption} and regularity of $\psi$, 
    \[
    \left[ A_h^{(k_1, \ldots, k_p)} - A_h\right] \psi(h_{k_1}, \ldots, h_{k_p}) = \sum_{q=1}^{d-p} \sum_{\substack{ \{l_1, \ldots, l_q\} \\ \subset \{1, \ldots, d\}-\{k_1, \ldots, k_p\} }} h_{l_1}^2 \dots h_{l_q}^2 \phi_{l_1, \ldots, l_1}(h_{l_1}, \ldots, h_{l_q}, h_{k_1}, \dots, h_{k_p})
    \]
    where the coefficient functions $\phi_{l_1, \ldots, l_q}$ are smooth even functions of the meshsizes. 

    By Lemma \ref{evenexp} we can further expand each $\phi_{l_1, \ldots, l_q}$ with respect to $(h_{k_1}, \ldots, h_{k_p})$ as 
    \[
    \phi_{l_1, \ldots, l_q}(h_{l_1}, \dots, h_{l_q}, h_{k_1}, \dots, h_{k_p}) = \sum_{ w=0 }^p \sum_{ \substack{\{g_1, \ldots, g_w\} \\ \subset \{k_1, \ldots, k_p\} } } h_{g_1}^2 \dots h_{g_w}^2 \delta_{g_1, \ldots, g_w}^{l_1, \ldots, l_q}(h_{l_1}, \dots, h_{l_q}, h_{g_1}, \dots, h_{g_w}). 
    \]

    This then gives the error expansion 
    \begin{align}\label{lemap}
        & h_{i_1}^4 \dots h_{i_n}^4 h_{j_1}^2 \dots h_{j_m}^2 \left[ A_h^{(k_1, \ldots, k_p) }- A_h\right] \psi(h_{k_1}, \dots, h_{k_p}) =  \\
        = & h_{i_1}^4 \dots h_{i_n}^4 h_{j_1}^2 \dots h_{j_m}^2 \sum_{\substack{ 1 \leq q \leq d-p \\ 0 \leq w \leq p  }} \sum_{\substack{ \{g_1, \ldots, g_w\} \subset \{k_1, \dots, k_p\} \\ \{l_1, \dots ,l_q\} \cap \{k_1, \dots, k_p\} = \emptyset }} h_{l_1}^2 \dots h_{l_q}^2 h_{g_1}^2 \dots h_{g_w}^2 \delta_{g_1, \dots, g_w}^{l_1, \dots, l_q}(h_{l_1}, \dots, h_{l_q}, h_{g_1}, \dots, h_{g_w}). \notag
    \end{align}
    
The proof of the lemma now boils down to identifying each summand in the above expression (\ref{lemap}) as either
\begin{enumerate}
    \item a pure fourth order term $h_{l_1}^4\dots h_{l_q}^4 \gamma_{l_1\dots l_q}(h_{l_1}, \dots, h_{l_q})$ for some $q \geq m+n+1$. 
    \item a mixed second order term of the form $h_l^2 h_{l_2}^2 \dots h_{l_q}^2 \beta_{ll_2\dots l_q}(h_{l_2}, \dots, h_{l_q})$ for some $q \geq m+n+1$. 
    \item a term of the form $ h_{r_1}^2 \dots h_{r_{m+n-q}}^2 h_{s_1}^4 \dots h_{s_q}^4 \sigma_{s_1\dots s_q}^{r_1\dots r_{m+n-q}}( h_{s_1}, \dots, h_{s_q} )$ for some $n+1 \leq q \leq m+n$. 
\end{enumerate}

The detailed case-by-case grouping below carries out this classification and thereby completes the proof of the lemma. In the following, it is important to note that $\{g_1, \dots, g_w\} \cup \{i_1, \dots, i_n\} \subseteq \{k_1, \dots, k_p\}$ and $ \big( \{l_1, \dots, l_q\} \cup \{j_1, \dots, j_m\} \big)\cap \{k_1, \dots, k_p\} = \emptyset$. 
\begin{itemize}
    \item The term with $\{i_1, \dots, i_n\} = \{g_1, \dots, g_w\}$ and  $\{j_1, \dots, j_m\} = \{l_1, \dots, l_q\}$ falls into the last category (3) with $q = m+n$: 
    \begin{align*}
        & h_{i_1}^4 \dots h_{i_n}^4 h_{j_1}^2 \dots h_{j_m}^2 h_{g_1}^2 \dots h_{g_w}^2 h_{l_1}^2 \dots h_{l_q}^2 \phi_{g_1\dots g_w}^{l_1\dots l_q}(h_{l_1}, \dots, h_{l_q}, h_{g_1}, \dots, h_{g_w}) \\ 
        = & h_{i_1}^4 \dots h_{i_n}^4 h_{j_1}^4 \dots h_{j_m}^4 \hat{\phi}_{i_1 \dots i_n}^{j_1 \dots j_m}(h_{i_1}, \dots, h_{i_n}, h_{j_1}, \dots, h_{j_m}). 
    \end{align*}
     
    \item For the terms with $\{j_1, \dots j_m\} \subseteq \{l_1, \dots, l_q\}$, let $\{i_1, \dots, i_n\} \cup \{g_1, \dots, g_w\} = \{u_1, \dots, u_a\}$ with $a \geq n$. WLOG we may assume that one of $a > n$ or $ q > m$, so that $a+q \geq m+n+1$ (otherwise we fall back to the previous case). Then 
    \begin{align*}
           & h_{i_1}^4 \dots h_{i_n}^4 h_{j_1}^2 \dots h_{j_m}^2 h_{g_1}^2 \dots h_{g_w}^2 h_{l_1}^2 \dots h_{l_q}^2 \phi_{g_1\dots g_w}^{l_1\dots l_q}(h_{l_1}, \dots, h_{l_q}, h_{g_1}, \dots, h_{g_w}) \\
           = & h_{u_1}^2 \dots h_{u_a}^2 h_{l_1}^2 \dots h_{l_q}^2 \hat{\phi}_{g_1 \dots g_w}^{l_1\dots l_q}(h_{u_1}, \dots, h_{u_a}, h_{l_1}, \dots, h_{l_q}) \\
           \eqqcolon &  h_{d_1}^2 \cdots h_{d_r}^2 \hat{\phi}_{g_1\dots g_w}^{l_1\dots l_q}(h_{d_1}, \dots, h_{d_r}) \\
           = & h_{d_1}^4 \cdots h_{d_r}^4 \sigma_{d_1\dots d_r}(h_{d_1}, \dots, h_{d_r}) \\
           & + \sum_{s=0}^{r-1}\sum_{ \substack{ \{c_1, \dots, c_s\} \sqcup \{ b_1, \dots, b_{r-s} \} \\ = \{d_1, \dots, d_r\} }} h_{b_1}^2 \cdots h_{b_{r-s}}^2 h_{d_1}^4 \dots h_{d_s}^4 \sigma_{d_1\dots d_s}(h_{d_1}, \dots, h_{d_s}) \\
           = & h_{d_1}^4 \cdots h_{d_r}^4 \sigma_{d_1\dots d_r}(h_{d_1}, \dots, h_{d_r}) \\
           & + \sum_{s=0}^{r-1}\sum_{ \substack{ \{c_1, \dots, c_s\} \sqcup \{ b_1, \dots, b_{r-s} \} \\ = \{d_1, \dots, d_r\} }} h_{b_1}^2 \cdots h_{b_{r-s}}^2 h_{d_1}^2 \dots h_{d_s}^2 \hat{\sigma}_{d_1\dots d_s}(h_{d_1}, \dots, h_{d_s})
    \end{align*}
    where $r = a+q\geq m+n+1$, $\{u_1,\dots, u_a\} \sqcup \{l_1, \dots, l_q\} = \{d_1, \dots, d_r\}$ and the term 
$\hat{\phi}_{g_1 \dots g_w}^{\,l_1 \dots l_q}$ has been expanded using 
Lemma~\ref{evenexp}.

These terms can be expressed as linear combination of a pure fourth order term (1) and mixed second order terms (2). 
    \item For the remaining terms, there exists $j \in \{j_1, \dots, j_m\}$ such that $j \notin \{l_1, \dots, l_q\}$. Up to relabeling, we may assume that $j = j_1$. First consider the case in which $ \{l_1, \dots, l_q\} \subseteq \{j_2, \dots, j_m\}$ and $\{g_1, \dots, g_w\} \subseteq \{i_1, \dots, i_n\}$ so that $ \{j, j_2, \dots, j_m\} \cup \{l_1, \dots, l_q\} \cup \{g_1, \dots, g_w\} \cup  \{i_1, \dots, i_n\}$ contains exactly $m+n$ distinct terms. Let $\{l_1, \dots, l_q\} \sqcup \{a_1, \dots, a_{m-1-q}\} = \{j_2, \dots, j_m\}$, then
    \begin{align*}
        & h_{i_1}^4 \dots h_{i_n}^4 h_{j_1}^2 \dots h_{j_m}^2 h_{g_1}^2 \dots h_{g_w}^2 h_{l_1}^2 \dots h_{l_q}^2 \phi_{g_1\dots g_w}^{l_1\dots l_q}(h_{l_1}, \dots, h_{l_q}, h_{g_1}, \dots, h_{g_w}) \\
        = & h_j^2 h_{a_1}^2 \dots h_{a_{m-1-q}}^2 h_{l_1}^4 \dots h_{l_q}^4 h_{i_1}^4 \dots h_{i_n}^4 \hat{\phi}(h_{l_1}, \dots, h_{l_q}, h_{i_1}, \dots, h_{i_n}). 
    \end{align*}
    Note that here $n+q \geq n+1$, so these terms fall under the last case (3). 
    \item Finally, for the remaining terms either there exists $l \in \{l_1, \dots, l_q\}$ such that $l \notin \{j_2, \dots, j_m\}$ or there is $g \in \{g_1, \dots, g_w\}$ such that $g \notin \{i_1, \dots, i_n\}$. Then $ \{j, j_2, \dots, j_m\} \cup \{l_1, \dots, l_q\} \cup \{g_1, \dots, g_w\} \cup \{i_1, \dots, i_n\} \eqqcolon \{j\} \sqcup \{a_1, \dots, a_r \}$ contains at least $ m+n+1$ distinct terms. Then 
    \begin{align*}
        & h_{i_1}^4 \dots h_{i_n}^4 h_{j_1}^2 \dots h_{j_m}^2 h_{g_1}^2 \dots h_{g_w}^2 h_{l_1}^2 \dots h_{l_q}^2 \phi_{g_1\dots g_w}^{l_1\dots l_q}(h_{l_1}, \dots, h_{l_q}, h_{g_1}, \dots, h_{g_w}) \\
        = & h_j^2 h_{a_1}^2 \dots h_{a_r}^2  \hat{\phi}(h_{a_1}, \dots, h_{a_r}), 
    \end{align*}
    which is of the form of the second–order terms in (2).
\end{itemize}
\end{proof}

\end{appendices}

\end{document}